\theoremstyle{plain}
\newtheorem{Thm}{Theorem}
\newtheorem{Prop}[Thm]{Proposition}
\newtheorem{Lem}[Thm]{Lemma}
\theoremstyle{definition}
\newtheorem{Defn}[Thm]{Definition}
\newtheorem{Expl}[Thm]{Example}
\theoremstyle{Remark}
\numberwithin{equation}{section}
\title{Derived McKay correspondence for $GL(3,\mathbf{C})$}
\author{Yujiro Kawamata}
\begin{document}
\maketitle

\begin{abstract}
We prove that the equivariant derived category for a finite subgroup of $GL(3,\mathbf{C})$ 
has a semi-orthogonal decomposition 
into the derived category of a certain partial resolution, called a maximal $\mathbf{Q}$-factorial terminalization,  
of the corresponding 
quotient singularity and a relative exceptional collection. 
This is a generalization of a result of Bridgeland, King and Reid.
\end{abstract}


\section{introduction}

The main theorem of this paper is the following generalization of the classical derived 
McKay correspondence for $SL(2,\mathbf{C})$ to the case of $GL(3,\mathbf{C})$:

\begin{Thm}\label{main}
Let $G$ be a finite subgroup of $GL(3,\mathbf{C})$, and let $X = \mathbf{C}^3/G$ be the quotient variety.
Let $\pi: \mathbf{C}^3 \to X$ be the projection, and define a $\mathbf{Q}$-divisor $B$ on $X$ by
an equality $\pi^*(K_X+B) = K_{\mathbf{C}^3}$.
Let $V_j$ ($j=1,\dots, m$) be all the proper linear subspaces of $\mathbf{C}^3$ whose innertia subgroups
$I_j = \{g \in G \mid g \vert_{V_j} = \text{Id}\}$ are non-trivial and not contained in $SL(3,\mathbf{C})$, let 
$D_j = \{g \in G \mid g(V_j)=V_j \}$ be the corresponding decomposition groups, and let $G_j = D_j/I_j$
be the quotient groups acting on the $V_j$. 
Then there exist smooth affine varieties $Z_i$ ($i=1,\dots,l$) 
and a projective birational morphism $f: Y \to X$ from a normal variety with only terminal quotient singularities, 
called a {\em maximal} $\mathbf{Q}$-factorial terminalization for the pair $(X,B)$,
such that there are fully faithful functors $\Phi_i: D^b(\text{coh}(Z_i)) \to D^b(\text{coh}([\mathbf{C}^3/G]))$ 
for $i =1,\dots,l$ and $\Phi: D^b(\text{coh}(\tilde Y)) \to D^b(\text{coh}([\mathbf{C}^3/G]))$ 
with a semi-orthogonal decomposition
\[
D^b(\text{coh}([\mathbf{C}^3/G])) 
\cong \langle \Phi_1(D^b(\text{coh}(Z_1))), \dots,  \Phi_l(D^b(\text{coh}(Z_l))), \Phi(D^b(\text{coh}(\tilde Y))) \rangle
\]
where $[\mathbf{C}^3/G]$ is a quotient stack, $\tilde Y$ is the smooth Deligne-Mumford stack associated to $Y$, 
and one of the following hold for each $Z_i$ ($i = 1,\dots,l$):

\begin{enumerate}

\item[(0)] $\dim Z_i = 0$, and $Z_i \cong V_j$ is the origin.

\item[(1)] $\dim Z_i = 1$, $Z_i$ is a smooth rational affine curve, 
and there is a finite morphism $Z_i \to V_j/G_j$ to the quotient curve for some $j$.

\item[(2)] $\dim Z_i = 2$, and $Z_i \to V_j/G_j$ is the minimal resolution of singularities of 
the quotient surface for some $j$.

\end{enumerate}

The correspondence $\{1,\dots,l\} \to \{1,\dots,m\}$ given by $i \mapsto j$ is not necessarily injective nor surjective.
\end{Thm}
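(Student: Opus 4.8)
The plan is the following. First I would construct the maximal $\mathbf{Q}$-factorial terminalization $f\colon Y\to X$ by running a $(K_X+B)$-minimal model program over $X$ and extracting exactly the divisors of discrepancy $\le 0$; this is legitimate because $(X,B)$ is klt and $X$ is a quotient singularity, so every intermediate model remains $\mathbf{Q}$-factorial with only terminal quotient singularities, and $\tilde Y$ is then the canonical smooth Deligne--Mumford stack of $Y$. The heart of the matter is to compare $D^b(\mathrm{coh}([\mathbf{C}^3/G]))$ with $D^b(\mathrm{coh}(\tilde Y))$, and for this I would resolve the birational map $[\mathbf{C}^3/G]\dashrightarrow\tilde Y$ by passing through a common smooth stacky refinement $\mathcal W$, reached from each side by a finite sequence of elementary moves of three kinds, each with a controlled effect on the derived category: a root stack of order $r$ along a smooth divisor $D$, which gives a semi-orthogonal decomposition with $r-1$ copies of $D^b(D)$ over $D^b$ of the base; a weighted blow-up along a smooth center $C$, which gives several copies of $D^b(C)$ (their number read off from the weights) over $D^b$ of the base; and a flop, which is a derived equivalence. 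This produces two semi-orthogonal decompositions of $D^b(\mathrm{coh}(\mathcal W))$, one with excess blocks over $D^b(\mathrm{coh}([\mathbf{C}^3/G]))$ and one with excess blocks over $D^b(\mathrm{coh}(\tilde Y))$; a mutation argument transporting the first family past the second yields the asserted decomposition, the required $\mathrm{Hom}$-vanishings being local over $X$.

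The factorization and the mutation bookkeeping are organized by the stratification of $\mathbf{C}^3$ by the subspaces $V_j$, treated in order of increasing dimension. Along the image of $V_j$ the quotient stack is, \'etale-locally and up to the twisting by $D_j/I_j = G_j$, a product of $[V_j/G_j]$ with the normal slice $[\mathbf{C}^{c_j}/I_j]$, where $c_j = 3-\dim V_j$ and $I_j\subset GL(c_j,\mathbf{C})$ has $\det\not\equiv 1$, so the local contribution of $V_j$ is governed by this lower-dimensional model, which I can resolve explicitly and transport back globally. If $c_j = 1$ ($V_j$ a plane), the slice is the root stack $[\mathbf{C}/I_j]$ of the affine line along its origin; the corresponding global move is a relative root stack along the divisor over $V_j$, and after resolving $V_j/G_j$ to its minimal resolution the resulting blocks become copies of $D^b(Z_i)$ with $Z_i\to V_j/G_j$ the minimal resolution --- the blocks of type (2). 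If $c_j = 2$ ($V_j$ a line), the slice is a two-dimensional non-Gorenstein quotient singularity, and resolving $\mathbf{C}^2/I_j$ to its minimal resolution through a chain of weighted blow-ups --- the crepant steps being derived equivalences, the others peeling off blocks --- yields globally copies of $D^b(Z_i)$ with $Z_i$ a smooth rational curve finite over $V_j/G_j$, the blocks of type (1). If $c_j = 3$ (the origin, present exactly when $G\not\subset SL(3,\mathbf{C})$), then after the deeper strata have been processed the remaining move is a weighted blow-up of the point, contributing copies of $D^b(\mathrm{pt})$ --- the blocks of type (0) --- while what is left over a neighbourhood of the origin is a three-dimensional quotient singularity whose terminalization is built from further weighted blow-ups of points and flops.

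For the loci where the inertia already lies in $SL$, and for the residual terminal part of $\tilde Y$ itself, all crepant steps are derived equivalences by the relative form of the theorem of Bridgeland, King and Reid; this is what identifies the last term with $D^b(\mathrm{coh}(\tilde Y))$ rather than with the derived category of a further modification, and the case $G\subset SL(3,\mathbf{C})$ of the theorem is precisely that result. Assembling the blocks in the order in which the moves are performed gives the semi-orthogonal decomposition; the functors $\Phi_i$ and $\Phi$ are the associated inclusions, hence automatically fully faithful. The correspondence $i\mapsto j$ is visibly non-injective --- one $V_j$ can produce several blocks, the number governed by the order of its root stack or the weights of its blow-ups --- and non-surjective, since a $V_j$ yields no block when the pair is already terminal there, as at the origin of a terminal cyclic quotient singularity.

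The hard part will be to make the factorization genuinely explicit for an arbitrary, possibly non-abelian, $G$: to realize $[\mathbf{C}^3/G]\dashrightarrow\tilde Y$ through $\mathcal W$ as an \emph{ordered} sequence of root stacks, smooth-center weighted blow-ups and flops whose centers match the data $(V_j,I_j,D_j,G_j)$ precisely enough that each $Z_i$ is visibly of type (0), (1) or (2), and then to run the mutation calculus reconciling the two families of excess blocks. In the abelian (toric) case this is the combinatorics of a fan refinement and is essentially bookkeeping; in general one must argue \'etale-locally near each $V_j$, control the $G$-action permuting the strata, and descend, and this is where the main technical weight lies. A secondary point is to confirm that the terminalization is genuinely \emph{maximal} --- that no divisor of discrepancy $\le 0$ survives over $\tilde Y$ --- which is exactly what keeps the final block clean.
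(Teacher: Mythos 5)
Your central step---resolving $[\mathbf{C}^3/G]\dashrightarrow\tilde Y$ through a common smooth stacky refinement $\mathcal W$, obtaining two semi-orthogonal decompositions of $D^b(\mathrm{coh}(\mathcal W))$ with ``excess blocks'' over each side, and then cancelling them by ``a mutation argument''---is a genuine gap, and it is precisely where this weak-factorization strategy is known to break down. A factorization into root stacks, weighted blow-ups and flops does not respect the direction of the log canonical class: blocks are created on both sides of $\mathcal W$, the two families of excess blocks are a priori not even equivalent as categories, and the $\mathrm{Hom}$-vanishings needed to transport one family past the other are simply not available (mutations require admissibility and orthogonality data you have not produced). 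If such a cancellation calculus worked formally, it would prove the DK hypothesis in essentially full generality, which is open; so this step cannot be waved through as bookkeeping, even in the toric case. A second, smaller gap: you assert that the curves $Z_i$ of type (1) (and the curves arising when the two-dimensional pieces are further decomposed) are rational, but nothing in your slice analysis forces this---a smooth affine curve finite over $V_j/G_j$ need not be rational, and the paper needs a separate global argument for exactly this point.

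The paper's architecture avoids both problems. It first sets $H=G\cap SL(3,\mathbf{C})$ and applies Theorem~\ref{BKR} to get a crepant resolution $Y_1\to\mathbf{C}^3/H$ with $D^b([\mathbf{C}^3/H])\cong D^b(Y_1)$; taking the quotient by the cyclic group $\mathbf{Z}_r=G/H$ replaces $[\mathbf{C}^3/G]$ by the stack of a locally toroidal KLT pair $(X_1,B_1)$, so all remaining singularities are abelian quotients. The maximal $\mathbf{Q}$-factorial terminalization $Y\to X_1$ is then connected to $X_1$ by an MMP along which the log canonical divisors are monotone (Theorem~\ref{maximal}(2)), and each divisorial contraction or flip, being toroidal, yields a semi-orthogonal decomposition directly in the correct direction by Theorem~\ref{toric}---no common refinement and no mutations occur anywhere. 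Rationality of the curve factors is proved globally: compactify to $[\mathbf{P}^3/G]$, which has a full exceptional collection, and use additivity of Hochschild homology (Theorem~\ref{HH}) together with the HKR isomorphism (Theorem~\ref{HKR}) to force $H^0(C,\Omega^1_C)=0$ for any curve $C$ appearing in the decomposition. To salvage your proposal you would need either to supply the missing mutation/cancellation mechanism with explicit orthogonality statements, or to reorganize the factorization so that every elementary step moves the log canonical class in one direction---which is, in effect, what the paper's BKR-plus-toroidal-MMP route accomplishes.
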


If $G \subset SL(3,\mathbf{C})$, then this is a theorem of Bridgeland-King-Reid \cite{BKR}.
In this case, $m=l=0$, $Y$ is smooth, and the conclusion is reduced to the equivalence
$D^b([\mathbf{C}^3/G]) \cong D^b(\text{coh}(Y))$.

We would like to call the terms on the right hand side except $\Phi_0(D^b(\text{coh}(Z_0)))$ 
a {\em relative exceptional collection} because it is an 
exceptional collection if $\dim Z_i = 0$ for all $i \ge 1$.

A maximal $\mathbf{Q}$-factorial terminalization is not unique, and the theorem holds for some choice of it.

We shall explain the terminology and the background of the theorem in the next section.
The theorem is a special case of so-called \lq\lq DK hypothesis''.

This work was partly done while the author stayed at National Taiwan University.
The author would like to thank Professor Jungkai Chen and 
National Center for Theoretical Sciences of Taiwan of the hospitality and 
excellent working condition.

This work is partly supported by Grant-in-Aid for Scientific Research (A) 16H02141.

\section{DK-hypothesis}

DK-hypothesis is a working hypothesis saying that equalities and inequalities of canonical divisors correspond to
equivalences and semi-orthogonal decompositions of derived categories (\cite{DK}, \cite{Seattle}).

We can compare $\mathbf{R}$-Cartier divisors on different but birationally equivalent normal varieties, i.e., 
$B = C$, or $B \ge C$, if the pull-backs of $B$ and $C$ coincide as divisors on some
sufficiently high model, or the pull-backs of $B$ is larger than that of $C$, respectively.
More precisely, suppose that there is a proper birational map $\alpha: X \dashrightarrow Y$ 
between normal varieties, 
and let $B$ and $C$ be $\mathbf{R}$-Cartier divisors on $X$ and $Y$, respectively.
Then there is a third normal variety $Z$ with proper birational morphisms $f: Z \to X$ and $g: Z \to Y$ such that 
$\alpha = g \circ f^{-1}$.
In this case, we say $B = C$ (resp. $B \ge C$) if $f^*B = g^*C$ (resp. $f^*B \ge g^*C$).
We note that this definition does not depend on the choice of the third model $Z$, but only on $\alpha$.

The canonical divisor $K_X$ of a normal algebraic variety $X$ is the basic invariant for the classification of varieties.
The minimal model program is the process consisting of birational maps which decrease the canonical divisors.
The canonical divisors of two projective varieties linked by a proper birational map $\alpha: X \dashrightarrow Y$ 
are compared by using the same rational canonical differential form of the function field.

In the minimal model program, it is better to deal with pairs instead of varieties.
This is so-called \lq\lq log'' philosophy.
Let $(X,B)$ be a pair of a normal variety $X$ and an effective $\mathbf{R}$-divisor $B$ 
such that $K_X+B$, the {\em log canonical divisor}, is an $\mathbf{R}$-Cartier divisor.
A prime divisor $E$ {\em above} $X$ is the one which appears as a prime divisor on a normal variety $Y$ 
with a birational morphism $f: Y \to X$.
The {\em coefficient} $e$ of $E$ is defined to be 
the coefficient in the expression $f^*(K_X+B) = K_Y + eE + \text{other components}$.
$-e$ is called the {\em discrepancy} of $E$ and $1-e$ the {\em log discrepancy}.
The coefficient does not depend on the choice of a higher model $Y$ but only on the discrete valuation of 
$k(X)$ determined by $E$.
The pair is called {\em KLT} if $e < 1$ for any prime divisor above $X$.

It is important to note that the canonical divisor of a smooth projective variety is a categorical invariant.  
Let $D^b(\text{coh}(X))$ be the bounded derived category of a smooth projective variety $X$.
Then the {\em Serre functor} $S_X$ is given by $S_X(a) = a \otimes \omega_X[\dim X]$ 
for any $a \in D^b(\text{coh}(X))$, where $\omega = \mathcal{O}_X(K_X)$ is the canonical sheaf. 
Serre functor satisfies a bifunctorial formula 
$\text{Hom}(a,b)^* \cong \text{Hom}(b,S_X(a))$ for $a,b \in D^b(\text{coh}(X))$.
Here we note that $\text{Hom}(a,b)$ is a finite dimensional vector space over the base field, 
and the functor $S_X$ is uniquely determined by this formula.

In general, let $\mathcal{D}$ be a triangulated category, and let $\mathcal{A}_i$ ($i = 1,\dots,l$) 
be triangulated full subcategories.
We say that there is a {\em semi-orthogonal decomposition (SOD)} 
$\mathcal{D} = \langle \mathcal{A}_1, \dots, \mathcal{A}_l \rangle$ if the following conditions are satisfied:

\begin{enumerate}

\item For any object $d \in \mathcal{D}$, there exist objects $a_i \in \mathcal{A}_i$ and
distinguished triangles $a_{i+1} \to d_{i+1} \to d_i \to a_{i+1}[1]$ ($i = 1,\dots,l-1$) 
for some $d_i \in \mathcal{D}$ such that $d_1 = a_1$ and $d_l = d$. 

\item $\text{Hom}(a_i,a_j) = 0$ if $a_i \in \mathcal{A}_i$, $a_j \in \mathcal{A}_j$, and $i > j$.

\end{enumerate}

{\em DK-hypothesis} is a working hypothesis stating that the inequalities of the canonical divisors $K_X$, or
more generally the log canonical divisors $K_X+B$, 
and the SOD's of the suitably defined derived categories $D(X)$ or $D(X,B)$ are parallel; the following
should be equivalent:

\begin{itemize}

\item $K_X = K_Y$ (resp. $K_X \ge K_Y$), or $K_X+B=K_Y+C$ (resp. $K_X+B \ge K_Y+C$).

\item $D(X) \cong D(Y)$ (resp. $D(X) = \langle \mathcal{A}, D(Y) \rangle$ for some $\mathcal{A}$), or
$D(X,B) \cong D(Y,C)$ (resp. $D(X,B) = \langle \mathcal{A}, D(Y,C) \rangle$ for some $\mathcal{A}$).

\end{itemize}

If $X$ is a smooth projective variety, then the bounded derived category of 
coherent sheaves$D(X) = D^b(\text{coh}(X))$ works well.
But the definition of $D(X)$ should be modified according to the situations.
Indeed the minimal model program requires that we consider pairs $(X,B)$ with singularities.
If $X$ has only quotient singularities, then we should consider the smooth Deligne-Mumford
stack associated to $X$ as manifested in \cite{Francia}.
If we consider a pair $(X,B)$, then we should define $D(X,B)$ suitably.
There is still ambiguity.

\vskip 1pc

This parallelism between MMP and SOD are first observed by Bondal and Orlov in the following example (\cite{BO}):

\begin{Expl}
(1) Let $f: X \to Y$ be a blowing up of a smooth projective variety $Y$ with a smooth center $F$ of codimension $c$ and
the exceptional divisor $E$.
Then $K_X = f^*K_Y + (c-1)E$.
Correspondingly, there is an SOD:
\[
\begin{split}
&D^b(\text{coh}(X)) \\
&= \langle j_*g^*D^b(\text{coh}(F)) \otimes \mathcal{O}_E((-c+1)E), \dots,
j_*g^*D^b(\text{coh}(F)) \otimes \mathcal{O}_E(-E), f^*D^b(\text{coh}(Y)) \rangle
\end{split}
\]
where $j: E \to X$ and $g: E \to F$ are natural morphisms.

(2) Let $X$ be a smooth projective variety of dimension $m+n+1$ containing a subvariety $E \cong \mathbf{P}^m$ whose
normal bundle is isomorphic to $\mathcal{O}_{\mathbf{P}^m}(-1)^{\oplus n+1}$.
Assume that $m \ge n$.
Let $f: Z \to X$ be the blowing up with center $E$ with the exceptional divisor
$G \cong \mathbf{P}^m \times \mathbf{P}^n$, 
and let $g: Z \to Y$ be the blowing down of $G$ to another direction.
Let $F = g(G) \cong \mathbf{P}^n$.
Then $f^*K_X = g^*K_Y + (m-n)G$.
Correspondingly, there is an SOD:
\[
D^b(\text{coh}(X)) = \langle j_*\mathcal{O}_E(-m+n), \dots,j_*\mathcal{O}_E(-1), f_*g^*D^b(\text{coh}(Y)) \rangle
\]
where $j: E \to X$ is a natural morphisms.
\end{Expl}

We note that (1) (resp. (2)) is a special case of a divisorial contraction (resp. flip) in the MMP.

If we take the unbounded derived category of quasi-coherent sheaves $D(\text{Qcoh}(X))$, 
then the DK-hypothesis does not work well:

\begin{Expl}\label{qcoh}
Let $f: X \to Y$ be a projective birational morphism of normal varieties such that 
$Rf_*\mathcal{O}_X \cong \mathcal{O}_Y$.
Then the pull-back functor $Lf^*: D(\text{Qcoh}(Y)) \to D(\text{Qcoh}(X))$ is fully-faithful, and 
there is a semi-orthogonal decomposition 
$D(\text{Qcoh}(X)) = \langle \mathcal{C}, Lf^*D(\text{Qcoh}(Y)) \rangle$ for 
$\mathcal{C} = \{c \in D(\text{Qcoh}(X)) \mid Rf_*c = 0\}$.

For example, let a cyclic group $G \cong \mathbf{Z}/r\mathbf{Z}$ act on an affine space 
$M = \mathbf{C}^n$ diagonally with the same weight, 
and let $Y = M/G$ be the quotient space.
It has a quotient singularity of type $1/r(1,\dots,1)$, and 
the blowing up at the singular point $f: X \to Y$ is a resolution of singularities.
Let $E \cong \mathbf{P}^{n-1}$ be the exceptional locus of $f$.
Then we have $K_X = f^*K_Y + (n-r)/r E$.
Thus $K_X < K_Y$ if $n < r$, but the unbounded derived categories of quasi-coherent sheaves
are related by fully faithful inclusions of opposite direction. 

Therefore the unbounded derived category of quasi-coherent sheaves does not detect the level of the canonical divisor
when there are singularities.
We note that quotient singularities, which are considered as pairs with $0$ boundaries, are typical KLT singularities, 
the singularities which appear naturally in the minimal model program
(KLT pair is defined in the next section).
In this sense, the category $D(\text{Qcoh}(X))$ is too big to answer subtle questions such as the MMP. 
\end{Expl}

The DK hypothesis works well if we replace a variety having quotient singularities by a Deligne-Mumford stack:

\begin{Expl}
We continue Example~\ref{qcoh}.
We consider the quotient stack $\tilde Y = [M/G]$ instead of the quotient variety $Y = M/G$. 
Then a coherent sheaf on $\tilde Y$ is nothing but a $G$-equivariant coherent sheaf on $M$.
Thus we have an equivalence $D^b(\text{coh}(\tilde Y)) \cong D^b(\text{coh}^G(M))$.

Let $g: M' \to M$ be the blowing-up at the origin $Q \in M$.
Then $G$ acts on $M'$ and we have an isomorphism $M'/G \cong X$.
Let $h: M' \to X$ be the projection.
Let $\mathcal{O}_Q(i) \in D^b(\text{coh}^G(M))$ ($0 \le i < r$) be a skyscraper sheaf of length $1$ 
supported at the origin $Q \in M$ on which $G$ acts by weight $i$.

If $n \le r$, then a functor $\Phi: D^b(\text{coh}(X)) \to D^b(\text{coh}^G(M))$ defined by 
$\Phi(b) = Rg_*Lh^*(b)$ is fully faithful, and we have a semi-orthogonal decomposition
\[
D^b(\text{coh}^G(M)) = \langle \mathcal{O}_Q(r-n),\dots, \mathcal{O}_Q(1), \Phi(D^b(\text{coh}(X))) \rangle.
\]
If $n \ge r$, then a functor $\Psi: D^b(\text{coh}^G(M)) \to D^b(\text{coh}(X))$ defined by
$\Psi(a) = (Rh_*Lg^*(a))^G$ is fully faithful, and we have a semi-orthogonal decomposition
\[
D^b(\text{coh}(X)) = \langle \mathcal{O}_E(-n+r),\dots, \mathcal{O}_E(-1), \Psi(D^b(\text{coh}^G(M))) \rangle.
\]
\end{Expl}

The derived McKay correspondence is a special case of DK-hypothesis:

\begin{Thm}[\cite{GSV}, \cite{KV}]
Let $G \subset SL(2,\mathbf{C})$ be a finite subgroup acting naturally on $M = \mathbf{C}^2$, 
let $Y = M/G$ be the quotient space, and let $f: X \to Y$ be the minimal resolution of singularities.
Let $M' = X \times_Y M$ be the fiber product with projections $p: M' \to X$ and $q: M' \to M$.
Then the functor $\Phi = Rq_*Lp^*: D^b(\text{coh}(X)) \to D^b(\text{coh}^G(M))$ is an equivalence.
\end{Thm}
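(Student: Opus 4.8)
The plan is to realize $\Phi$ as a Fourier--Mukai-type functor and to verify the hypotheses of the Bridgeland--King--Reid criterion, which in this two-dimensional situation reduce to one dimension estimate that is essentially automatic. Write $D^b(\text{coh}^G(M))=D^b(\text{coh}([M/G]))$. Since $G\subset SL(2,\mathbf{C})$, the form $dx\wedge dy$ is $G$-invariant, so $\omega_M\cong\mathcal{O}_M$ as a $G$-equivariant sheaf, the quotient $Y$ is a rational double point, and the minimal resolution $f$ is crepant, hence $\omega_X\cong\mathcal{O}_X$. First I would record the geometry of $M'=X\times_Y M$: identifying $X$ with $G\text{-Hilb}(\mathbf{C}^2)$ (Ito--Nakamura), one checks that $M'$ is reduced and coincides with the universal family of $G$-clusters, that $q\colon M'\to M$ is a $G$-equivariant projective birational morphism with $Rq_*\mathcal{O}_{M'}\cong\mathcal{O}_M$, and that $p\colon M'\to X$ is \emph{finite and flat} of degree $|G|$, its fibres being the length-$|G|$ subschemes parametrized by $X$. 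Thus $Lp^*=p^*$ is exact, $Rq_*$ has cohomological amplitude $[0,1]$, and $\Phi=Rq_*p^*$ carries $D^b(\text{coh}(X))$ into $D^b(\text{coh}^G(M))$; it is the Fourier--Mukai functor on $X\times[M/G]$ with kernel $\mathcal{O}_{M'}$, and it has left and right adjoints of the same type built from $\mathcal{O}_{M'}$, $\omega_X$ and $\omega_{[M/G]}$.

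Next I would prove that $\Phi$ is fully faithful by the Bridgeland--King--Reid intersection theorem. It suffices to check, for all closed points $x,y\in X$, that $\text{Hom}_{[M/G]}(\Phi\mathcal{O}_x,\Phi\mathcal{O}_y[i])$ vanishes for $x\neq y$ and for $x=y,\ i\notin[0,2]$, and equals $\mathbf{C}$ for $x=y,\ i=0$. Because $p$ is finite, $\Phi\mathcal{O}_x$ is the equivariant $q$-pushforward of a sheaf on the finite scheme $p^{-1}(x)$, so these $\text{Ext}$-groups are controlled by the fibre product $W:=M'\times_X M'$; since $p$ is finite, $W$ is finite over $X$, whence $\dim W=\dim X=2=\dim M$. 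The inequality $\dim(M'\times_X M')\le\dim M$ is exactly the numerical input of the criterion, and in combination with the smoothness of $X$ and of the stack $[M/G]$ it yields the vanishings above --- equivalently, it upgrades the convolution kernel of $\Phi^R\circ\Phi$ on $X\times X$, a priori supported on $W$ and generically $\mathcal{O}_{\Delta_X}$, to $\mathcal{O}_{\Delta_X}$ itself. Hence $\Phi$ is fully faithful.

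Finally I would promote full faithfulness to an equivalence, following the last step of \cite{BKR}. Since $\omega_X\cong\mathcal{O}_X$ and $\omega_{[M/G]}\cong\mathcal{O}_{[M/G]}$, the left and right adjoints of $\Phi$ coincide up to the common shift $[2]$, so for any object $c$ the cone of the counit $\Phi\Phi^R c\to c$ is simultaneously left- and right-orthogonal to $\text{Im}\,\Phi$; as $\text{Im}\,\Phi$ contains the structure sheaves of all closed points lying over the free locus of $Y$ and $M$ is irreducible, a spanning-class argument forces this cone to vanish, so $\Phi$ is an equivalence. The main obstacle is the intersection theorem of \cite{BKR} --- the $\text{Tor}$-vanishing/dimension estimate that powers full faithfulness --- together with the $SL(2)$-specific input that $M'=X\times_Y M$ is reduced and coincides with the $G$-Hilbert universal family, so that $p$ is finite flat; granting these, the rest is formal. (The original proofs in \cite{GSV} and \cite{KV} instead go through the tautological bundles, using $p_*\mathcal{O}_{M'}\cong\bigoplus_\rho\rho\otimes\mathcal{R}_\rho$ and the identification of $\text{End}(p_*\mathcal{O}_{M'})$ with the skew group algebra $\mathbf{C}[M]\rtimes G$, i.e.\ a tilting description of $D^b(\text{coh}(X))$.)
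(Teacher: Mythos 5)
Your overall strategy (realize $\Phi$ as a Fourier--Mukai functor and run the Bridgeland--King--Reid machinery in dimension two) is a legitimate route --- the paper itself gives no proof, citing \cite{GSV} and \cite{KV}, whose arguments go instead through tautological bundles and explicit resolutions --- but two of your concrete claims are false as stated, and they sit at load-bearing points. First, the scheme-theoretic fiber product $M'=X\times_Y M$ is \emph{not} reduced and $p$ is \emph{not} flat, already for $G=\{\pm 1\}$ (type $A_1$): in the chart where $X$ has coordinates $(a,\gamma)$ mapping to $Y=\operatorname{Spec}\mathbf{C}[u^2,v^2,uv]$ by $(a,a\gamma^2,a\gamma)$, one finds $M'=\operatorname{Spec}\mathbf{C}[\gamma,u,w]/(uw,w^2)$ with $w=u\gamma-v$, which carries an embedded curve along $\{u=w=0\}$ lying over the exceptional $\mathbf{P}^1$, and the fibers of $p$ over exceptional points have length $3=|G|+1$, not $|G|$. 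What is reduced, finite and flat of degree $|G|$ over $X$ is the \emph{reduction} $M'_{\mathrm{red}}$, which is the universal family of $G$-clusters; so your argument really addresses the kernel $\mathcal{O}_{M'_{\mathrm{red}}}$, and to prove the statement as literally phrased you must either work with $Lp^*$ on the non-reduced fiber product (where exactness of $p^*$ and your fiber descriptions fail) or argue that the two kernels give the same functor --- neither is addressed.

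Second, the ``numerical input'' you invoke is the wrong fiber product. The BKR criterion requires a bound on the fiber product of the resolution with itself \emph{over the singular quotient}, i.e.\ $\dim(X\times_Y X)\le n+1=3$ here (true, since the exceptional locus is a curve); your $W=M'\times_X M'$ is finite over the smooth surface $X$ for trivial reasons and carries no information --- indeed its image in $X\times X$ is the diagonal, whereas the convolution kernel of $\Phi^R\circ\Phi$ is supported on the image of $M'\times_M M'$, which lies in $X\times_Y X$. Relatedly, the vanishing of $\operatorname{Hom}(\Phi\mathcal{O}_x,\Phi\mathcal{O}_y[i])$ for distinct points $x\ne y$ on the exceptional curves is not a support statement (both clusters contain the origin); it is precisely the content of the intersection-theorem argument of \cite{BKR}, which you assert rather than carry out. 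A clean repair is: quote Ito--Nakamura to identify $X$ with $G\text{-Hilb}(\mathbf{C}^2)$ and $M'_{\mathrm{red}}$ with the universal family, verify $\dim(X\times_Y X)\le 3$, and then apply the main theorem of \cite{BKR} verbatim for $n=2$; your final step (trivial canonical sheaves plus a spanning-class argument to pass from fully faithful to equivalence) is then the standard one and is fine.
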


The derived McKay correspondence is extended to the case $G \subset SL(3,\mathbf{C})$ (\cite{BKR}) and 
$G \subset GL(2,\mathbf{C})$ (\cite{Ishii-Ueda},\cite{toricIII}).
The purpose of this paper is to extend it to the case $G \subset GL(3,\mathbf{C})$.

An important special case of {\em $K$-equivalence}, a proper birational map which does not change the level of 
canonical divisors, is a flop.
A {\em flop} is a diagram as follows
\[
\begin{CD}
X @>f>> Y @<g<< Z
\end{CD}
\]
where $f$ and $g$ are projective birational morphisms which are isomorphisms in codimension $1$, 
$\rho(X/Y) = \rho(Z/Y) = 1$ (relative Picard numbers), and
such that $K_X = f^*K_Y$ and $K_Z = g^*K_Y$.
The DK-hypothesis predicts that $D(X) \cong D(Y)$ for suitable interpretations.
There are positive answers in many cases (\cite{Bridgeland}, \cite{Chen}, \cite{VdBergh}, \cite{Namikawa}, 
\cite{DK}, \cite{Kaledin}, 
\cite{BK}, \cite{Cautis}, \cite{DS}, \cite{BFK}, \cite{HL}, \cite{CKP}).

The following proposition supports the DK-hypothesis in the case of inequalities:

\begin{Prop}
Let $X$ and $Y$ be smooth projective varieties.
Assume the following conditions:

(1) There is a fully faithful functor $j_*: D^b(\text{coh}(X)) \to D^b(\text{coh}(Y))$ 
with a kernel $P \in D^b(\text{coh}(X \times Y)$.

(2) There are an irreducible component $Z$ of the support of $P$ and 
open dense subsets $X^o, Y^o, Z^o$ of $X,Y,Z$, respectively, 
such that 
$p_1: Z^o \to X^o$ and $p_2: Z^o \to Y^o$ are isomorphisms and 
$P \vert_{X^o \times Y^o} \cong O_{\Delta_{X^o}}$.

Then there is an inequality $p_1^*K_X \le p_2^*K_Y$ on $Z$.
\end{Prop}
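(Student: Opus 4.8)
The plan is to extract the inequality from the existence of the fully faithful functor by exploiting that a fully faithful functor between derived categories of smooth projective varieties commutes with Serre functors, and that the Serre functor is tensoring by the canonical sheaf (shifted). Concretely, $j_*$ being fully faithful forces $S_Y \circ j_* \cong j_* \circ S_X$, and passing to kernels this becomes a statement about $P$: the kernel of $S_Y \circ j_*$ is $P \otimes p_2^*\omega_Y[\dim Y]$ and the kernel of $j_* \circ S_X$ is $P \otimes p_1^*\omega_X[\dim X]$ (using that convolution with the kernel $O_\Delta \otimes \omega[\dim]$ of the Serre functor just twists and shifts). Hence we get an isomorphism in $D^b(\mathrm{coh}(X\times Y))$
\[
P \otimes p_1^*\omega_X[\dim X] \cong P \otimes p_2^*\omega_Y[\dim Y].
\]

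First I would record that $\dim X = \dim Y$ (they are birational via $Z$, or more elementarily: a fully faithful functor with the $O_\Delta$ behavior on open sets forces equal dimension — and in any case one can check the shifts match), so the shifts drop out and we have $P \otimes p_1^*\omega_X \cong P \otimes p_2^*\omega_Y$ as objects on $X \times Y$. Next I would localize to the component $Z$: restricting to a neighborhood of the generic point of $Z$ where $P$ restricts to a line bundle (not merely $O_\Delta$ — we only know $P|_{X^o\times Y^o}\cong O_{\Delta_{X^o}}$, but near the generic point of $Z$ the object $P$ has a well-defined "leading" rank-one piece; more carefully, since $Z^o\to X^o$ and $Z^o\to Y^o$ are isomorphisms, $O_{\Delta_{X^o}}$ pushed to $Z$ is a line bundle supported on $Z^o$), the isomorphism of the two twists of $P$ restricts to an isomorphism of the corresponding twists of this line bundle on $Z^o$, giving $p_1^*\omega_X \cong p_2^*\omega_Y$ on $Z^o$, hence a linear equivalence of divisors $p_1^*K_X \sim p_2^*K_Y$ on $Z^o$.

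Then I would take closures: $Z$ is a projective variety birational to both $X$ and $Y$ (via its normalization $\tilde Z$, say), and I need to compare $p_1^*K_X$ and $p_2^*K_Y$ as divisor classes on $Z$ — or, in the language of Section 2, as divisors compared on a high model. The linear equivalence on the open set $Z^o$ says the two pullbacks differ by a divisor supported on $Z \setminus Z^o$, i.e. on the exceptional loci. To get the \emph{inequality} $p_1^*K_X \le p_2^*K_Y$ rather than just agreement off a closed set, I would use that the full support of $P$ is contained in the total space and analyze which exceptional divisors can appear: the point is that $P$ is an honest object of $D^b(\mathrm{coh}(X\times Y))$ (a complex of coherent, hence effective-ish, sheaves), and the discrepancy-type bookkeeping — comparing how $\omega_X$ and $\omega_Y$ pull back to a common resolution of $Z$, using that the extra components of $\mathrm{Supp}(P)$ over $X^o$ and $Y^o$ contribute with definite signs — forces $p_2^*K_Y - p_1^*K_X$ to be effective on $Z$ (equivalently on any sufficiently high model dominating $Z$).

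The main obstacle I expect is precisely this last sign/effectivity step: the Serre-functor argument cleanly yields the numerical/linear \emph{equality} of $p_1^*K_X$ and $p_2^*K_Y$ away from the exceptional loci, but promoting this to the \emph{inequality} on all of $Z$ requires controlling the coefficients along the exceptional divisors, and that control has to come from the coherence (boundedness, and the structure sheaf normalization on the open set) of the kernel $P$ — one needs to argue that $P$ cannot "thicken" in a way that would make the canonical class jump the wrong direction. I would handle this by passing to a common log resolution $W \to Z$ with $W \to X$ and $W \to Y$, writing $K_W = p_1^* K_X + \sum a_i E_i = p_2^* K_Y + \sum b_i E_i$, and showing $a_i \le b_i$ for each exceptional $E_i$ by a local computation near the generic point of $E_i$ using that $Rp_{1*}P$ near there still "sees" $O_X$ (fully faithfulness gives $Rp_{1*}(P \otimes p_2^* (-))$ recovers the identity-like behavior), while the $Y$-side may have acquired extra positivity; this is where the Bondal--Orlov-type discrepancy estimate enters and is the crux of the argument.
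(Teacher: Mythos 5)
Your opening step is where the argument breaks: a fully faithful functor does \emph{not} commute with Serre functors. Full faithfulness (plus adjoints) gives only $S_X \cong j^!S_Yj_*$, where $j^!$ is the right adjoint of $j_*$; composing with the counit $j_*j^! \to 1$ yields a canonical \emph{morphism} $j_*S_X = j_*j^!S_Yj_* \to S_Yj_*$, not an isomorphism. An isomorphism $S_Yj_* \cong j_*S_X$ would mean the image of $j_*$ is stable under $S_Y$, which fails precisely in the situations this Proposition is about. Concretely, in the Bondal--Orlov Example (2) of this paper with $m>n$, the hypotheses of the Proposition hold for the fully faithful functor $f_*g^*$, $Z$ is the graph closure containing the divisor $E\times F$, and your claimed isomorphism $P\otimes p_1^*\omega_X \cong P\otimes p_2^*\omega_Y$ would (by your own localization argument) force $p_1^*K_X \sim p_2^*K_Y$ on $Z$; but these differ by the nonzero effective divisor $(m-n)(E\times F)$ coming from $f^*K_X = g^*K_Y+(m-n)G$, a contradiction. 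So the isomorphism you start from is false, and, as you yourself anticipate, it can only ever give information on $Z^o$, where the statement is vacuous.

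The second half of your proposal, where the actual content lies (effectivity of $p_2^*K_Y - p_1^*K_X$ along $Z\setminus Z^o$), is left as a sketch of ``discrepancy bookkeeping'' with no mechanism supplied; this is exactly the step that must be proved, and your framework has discarded the tool that proves it. The paper's argument is shorter and is built on the asymmetry you erased: from the canonical one-way morphism of functors $j_*S_X \to S_Yj_*$ one gets a morphism of kernels $f\colon p_1^*\omega_X\otimes P \to P\otimes p_2^*\omega_Y$, which by hypothesis (2) restricts to an isomorphism over $X^o\times Y^o$. Restricting to the component $Z$, where $P$ is generically the line bundle $\mathcal{O}_{\Delta_{X^o}} \cong \mathcal{O}_{Z^o}$, this produces a nonzero map $\mathcal{O}_Z(p_1^*K_X) \to \mathcal{O}_Z(p_2^*K_Y)$ that is an isomorphism on $Z^o$, i.e.\ a section of $\mathcal{O}_Z(p_2^*K_Y - p_1^*K_X)$ nonvanishing on a dense open set; effectivity of $p_2^*K_Y - p_1^*K_X$, hence the inequality, follows at once. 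The direction of the inequality is literally the direction of the adjunction counit, so any repair of your approach has to reinstate that morphism rather than an isomorphism.
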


\begin{proof}
Let $j^!$ be the right adjoint functor of $j_*$.
There is an adjunction morphism $j_*j^! \to 1$.
Let $S_X$ and $S_Y$ be the Serre functors on $D^b(\text{coh}(X))$ and $D^b(\text{coh}(Y))$, respectively.
Then we have $S_X = j^!S_Yj_*$.
Thus there is a morphism of functors $j_*S_X = j_*j^!S_Yj_* \to S_Yj_*$.

The kernels of $j_*S_X$ and $S_Yj_*$ are given by $p_1^*\omega_X \otimes P$ and
$P \otimes p_2^*\omega_Y$, respectively.
Therefore there is a morphism 
\[
f: p_1^*\omega_X \otimes P \to P \otimes p_2^*\omega_Y.
\]
Since $f \vert_{X^o \times Y^o}$ is an isomorphism, $f$ gives an inequality $p_1^*K_X \le p_2^*K_Y$.
\end{proof}

We have similar statement for smooth Deligne-Mumford stacks associated to 
projective varieties with only quotient singularities, 

\section{toroidal case}

We recall results on the change of derived categories under the toroidal MMP.

Let $(X,B)$ be a pair consisting of a toroidal $\mathbf{Q}$-factorial variety $X$ whose boundary has no self-intersection
and a toroidal $\mathbf{Q}$-divisor $B$ whose coefficients 
belong to the standard set $\{1 - 1/e \mid e \in \mathbf{Z}_{>0}\}$.
We call such a pair simply a {\em toroidal $\mathbf{Q}$-factorial KLT pair} in this paper.

For a toroidal $\mathbf{Q}$-factorial KLT pair $(X,B)$, the variety $X$ has only quotient singularities,
 i.e., there is a quasi-finite and surjective morphism $\pi': X' \to X$ 
from a smooth scheme, which may be reducible, such that $\pi'$ is \'etale over the smooth locus of $X$, 
and that the pull-back $B' = (\pi')^*B$ is supported by a simple normal crossing divisor. 
Then there is a quasi-finite surjective morphism $\pi'': X'' \to X'$ from a smooth scheme whose ramification is 
given by the formula $(\pi'')^*(K_{X'} + B') = K_{X''}$. 
The covering morphism $\pi' \circ \pi'': X'' \to X$ defines a structure 
of a smooth Deligne-Mumford stack $\tilde X$ with a bijective morphism $\pi: \tilde X \to X$
such that $\pi^*(K_X+B) = K_{\tilde X}$.
We call $\tilde X$ the smooth Deligne-Mumford stack {\em associated to the pair} $(X,B)$. 
It is independent of the choice of $\pi'$ and $\pi''$ but only on the pair $(X,B)$.
Toroidal singularities are abelian quotient singularities.
But this construction works for any quotient singularities.

We start with the case of Mori fiber spaces:

\begin{Thm}[\cite{toric}]
Let $(X,B)$ be a toroidal $\mathbf{Q}$-factorial KLT pair, 
let $Y$ be another toroidal variety whose boundary has no self-intersection, 
and let $f: X \to Y$ be a projective surjective toroidal
morphism with connected fibers such that $\dim Y < \dim X$, $\rho(X/Y) = 1$ for the relative Picard number, 
and that $-(K_X+B)$ is $f$-ample.
Then the following hold:

(1) There exists a toroidal $\mathbf{Q}$-divisor $C$ on $Y$ such that 
$(Y,C)$ is a toroidal $\mathbf{Q}$-factorial KLT pair, 
and $f$ induces a smooth morphism
$\tilde f: \tilde X \to \tilde Y$ between smooth Deligne-Mumford stacks associated 
to the pairs $(X,B)$ and $(Y,C)$.

(2) There exist an integer $l$ and 
fully faithful functors $\Phi_i: D^b(\text{coh}(\tilde Y)) \to D^b(\text{coh}(\tilde X))$ for $i=1,\dots,l$ such that 
there is a semi-orthogonal decomposition:
\[
D^b(\text{coh}(\tilde X)) = \langle \Phi_1(D^b(\text{coh}(\tilde Y))), \dots, \Phi_l(D^b(\text{coh}(\tilde Y))) \rangle.
\] 
\end{Thm}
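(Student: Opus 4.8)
The plan is to reduce the statement to a toric computation and then invoke known results on semi-orthogonal decompositions under toric Mori fiber spaces. First I would pass to local analytic (or formal, or toric) models: since everything is toroidal and the boundary has no self-intersection, the assertion is local over $Y$, so I may assume $Y$ is an affine toric variety and $f\colon X\to Y$ is a toric Mori fiber space with $\rho(X/Y)=1$. The fibers of $f$ are then toric Fano-type varieties of relative Picard number one; the simplest case is $X\to Y$ a $\mathbf{P}^1$- or weighted-$\mathbf{P}^1$-bundle-like contraction, and in general the generic fiber is a weighted projective space (or a Fano toric quotient thereof). The existence of the toroidal $\mathbf{Q}$-divisor $C$ on $Y$ making $(Y,C)$ a toroidal $\mathbf{Q}$-factorial KLT pair, and the induced smooth morphism $\tilde f\colon\tilde X\to\tilde Y$ of associated Deligne-Mumford stacks, is part~(1) and I would take it as given; the content is part~(2).

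For part~(2), the key idea is a relative version of Orlov's projective bundle theorem adapted to stacks. I would exhibit on $\tilde X$ a relatively tilting-type collection: a sequence of objects $E_1,\dots,E_l$ (in the basic case line bundles $\mathcal{O}_{\tilde X},\mathcal{O}_{\tilde X}(1),\dots,\mathcal{O}_{\tilde X}(l-1)$ twisted by fractional characters coming from the stack structure, i.e. eigen-line-bundles of the relative torus action) such that $R\tilde f_*\mathcal{R}\mathcal{H}om(E_i,E_j)$ is $\mathcal{O}_{\tilde Y}$ in one triangular range and $0$ in the other, and such that the $E_i$ generate $D^b(\mathrm{coh}(\tilde X))$ relatively over $\tilde Y$. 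Concretely, each fully faithful functor $\Phi_i$ would be $b\mapsto L\tilde f^*(b)\otimes E_i$, and fully-faithfulness plus semi-orthogonality follow from the projection formula together with the vanishing/identity statement for $R\tilde f_*\mathcal{R}\mathcal{H}om(E_i,E_j)$. Generation follows because the fibers of $\tilde f$ are (stacky) weighted projective spaces, whose derived categories are generated by the relevant line bundles (a Beilinson-type resolution of the diagonal, done equivariantly/fractionally), so the functor $D^b(\mathrm{coh}(\tilde X))\to D^b(\mathrm{coh}(\tilde X))$ built from the $\Phi_i(D^b(\mathrm{coh}(\tilde Y)))$ hits every object.

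The main obstacle I anticipate is the bookkeeping of the stacky (fractional) twists: the number $l$ and the precise eigen-line-bundles $E_i$ depend on the weights of the $\mathbf{Q}$-factorial KLT structure along the fibers, i.e. on the coefficients of $B$ in the standard set $\{1-1/e\}$ and on how the stabilizer groups of $\tilde X$ and $\tilde Y$ differ. One must check that the generators genuinely descend to objects of $D^b(\mathrm{coh}(\tilde X))$ (not just of some finite cover), that the diagonal resolution on a stacky weighted projective bundle has the right length $l$, and that the triangular Ext-vanishing holds with these fractional twists. The cleanest route is probably to base change to a smooth toric cover where the computation becomes an ordinary projective-bundle (Orlov) statement, then take invariants; the descent of full-faithfulness and of the SOD under taking $G$-invariants is routine but needs the usual care that $R\tilde f_*$ commutes with the relevant base change and that the summands are closed under the group action. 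Once the toric/stacky model is handled, globalizing over $Y$ is formal because the decomposition is canonical and compatible with restriction to torus-invariant opens.
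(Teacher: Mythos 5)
This theorem is not proved in the present paper at all---it is quoted from \cite{toric}---and your outline follows essentially the same route as that reference: reduce to local toric models, define $\Phi_i(b)=L\tilde f^*(b)\otimes E_i$ for suitable fractional (eigen-)line bundles, verify fully faithfulness and semi-orthogonality by computing $R\tilde f_*$ of the relevant twists via the projection formula, and obtain generation from a fiberwise Beilinson-type resolution on the stacky weighted projective space fibers. The bookkeeping you defer (the exact twists $E_i$ and the length $l$) is precisely what the lattice computations of \cite{toric} supply, and the toroidal globalization you gesture at is dealt with in this paper only by the remark that fully faithfulness can be checked \'etale-locally via \cite{Bridgeland1}.
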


It is natural to consider the pairs instead of varieties because we have to consider them.
Indeed $C$ can be non-zero even if $B = 0$.
$\tilde X$ is smooth even if $X$ is singular, and $\tilde f$ is smooth even if $f$ has singular fibers.

The DK hypothesis works well in the case of toroidal MMP if we replace varieties and pairs by their associated stacks:

\begin{Thm}[\cite{logcrep}, \cite{toric}, \cite{toricII}, \cite{toricIII}]\label{toric}
Let $(X,B)$ and $(Y,C)$ be toroidal $\mathbf{Q}$-factorial KLT pairs, and 
let $\tilde X$ and $\tilde Y$ be the smooth Deligne-Mumford stacks associated to them.
Assume that one of the following conditions hold:

\begin{enumerate}

\item $X = Y$,  $B \ge C$, and the support of $B-C$ is a prime divisor.

\item There is a toroidal projective birational morphism $f: X \to Y$ whose exceptional locus is a prime divisor 
and such that $K_X + B \ge f^*(K_Y+C)$ and $C = f_*B$.

\item There is a toroidal projective birational morphism $f: Y \to X$ whose exceptional locus is a prime divisor 
and such that $f^*(K_X + B) \ge K_Y+C$ and $B = f_*C$.

\item There is a toroidal birational map $f: X \dashrightarrow Y$ which is factorized as $f = f_2^{-1} \circ f_1$ by
toroidal projective birational morphisms $f_1: X \to Z$ and $f_2: Y \to Z$ whose exceptional 
loci have codimension at least 2 and such that $\rho(X/Z) = \rho(Y/Z) = 1$.
Moreover $\alpha^*(K_X + B) \ge \beta^*(K_Y+C)$ for some projective 
birational morphisms $\alpha: W \to X$ and $\beta: W \to Y$ from another variety $W$.

\end{enumerate}

Then there is a toroidal $\mathbf{Q}$-factorial KLT pair $(F,B_F)$ with an integer $l$ in the cases except (4)
(resp. toroidal $\mathbf{Q}$-factorial KLT pairs $(F_j,B_{F_j})$ with integers $l_j$ for $j = 1,\dots,m$ in the case (4))
such that there are fully faithful functors 
$\Phi_i: D^b(\text{coh}(\tilde F)) \to D^b(\text{coh}(\tilde X))$ ($i=1,\dots,l$) 
(resp. $\Phi_{i,j}: D^b(\text{coh}(\tilde F_j)) \to D^b(\text{coh}(\tilde X))$ ($i=1,\dots,l_j$, $j=1,\dots,m$)))
and 
$\Phi: D^b(\text{coh}(\tilde Y)) \to D^b(\text{coh}(\tilde X))$ 
with a semi-orthogonal decomposition
\[
D^b(\text{coh}(\tilde X)) = \langle \Phi_1(D^b(\text{coh}(\tilde F))), \dots, \Phi_l(D^b(\text{coh}(\tilde F))), 
\Phi(D^b(\text{coh}(\tilde Y))) \rangle
\]
(resp. 
$D^b(\text{coh}(\tilde X)) = \langle \Phi_{i,j}(D^b(\text{coh}(\tilde F_j)))_{i,j},
\Phi(D^b(\text{coh}(\tilde Y))) \rangle$), 
where $\tilde F$ (reps. $\tilde F_j$) is the smooth Deligne-Mumford stack associated to the pair $(F,B_F)$
(resp. $(F_j,B_{F_j})$).
Moreover the pairs $(F,B_F)$ and $(F_j,B_{F_j})$ appear in the following way in the cases (1) through (4):

\begin{enumerate}
\item $F$ is the support of $B-C$.

\item For the exceptional divisor $E$ of $f$, the induced morphism $f \vert_E: E \to f(E)$ is a Mori fiber space,
and $F = f(E)$.

\item The same as the case (2).

\item For the connected components $E_j$ of the exceptional locus of $f_1$, 
the induced morphisms $f_1 \vert_{E_j}: E_j \to f_1(E_j)$ are Mori fiber spaces,
and $F_j = f_1(E_j)$.

\end{enumerate}

The boundary divisors $B_F$ and $B_{F_j}$ are determined by suitable adjunctions.
\end{Thm}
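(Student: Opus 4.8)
The plan is to treat the four cases by localizing in the toroidal structure. On each torus-invariant affine chart both $\tilde X$ and $\tilde Y$ become quotient stacks $[U/G]$ with $U$ a smooth affine toric variety and $G$ a finite abelian group, and, more conveniently, the relevant stacks can be presented as GIT quotient stacks $[\mathbf{A}^N /\!/_\theta T]$ of a common affine space by a torus $T$, with each birational operation realized by moving the stability parameter $\theta$ across a wall. In every case the auxiliary pair is the one prescribed in the statement: the prime divisor $F = \text{Supp}(B-C)$ in (1), the base $F = f(E)$ of the Mori fiber space $f\vert_E \colon E \to f(E)$ in (2) and (3), and the bases $F_j = f_1(E_j)$ of the Mori fiber spaces on the connected components $E_j$ of the exceptional locus of $f_1$ in (4). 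The boundary $B_F$ (resp. $B_{F_j}$) is pinned down by the corresponding adjunction $(K_X+B)\vert_{\tilde F} = K_{\tilde F} + B_F$, so that $\tilde F$ (resp. $\tilde F_j$) is again a toric Deligne-Mumford stack of strictly smaller dimension. I would establish the local statements first, and then check that they glue.

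First I would dispose of the divisorial cases (2) and (3) together. Near the exceptional prime divisor $E$ the induced map $f\vert_E \colon E \to F$ is a toric Mori fiber space, so the preceding Mori fiber space theorem gives a semi-orthogonal decomposition of $D^b(\text{coh}(\tilde E))$ into finitely many copies of $D^b(\text{coh}(\tilde F))$. On the other hand a stacky refinement of the Bondal-Orlov blow-up formula recalled above builds $D^b(\text{coh}(\tilde X))$ from $D^b(\text{coh}(\tilde Y))$ together with a string of twists $j_*\bigl(g^*(-)\otimes \mathcal O_E(kE)\bigr)$ of $D^b(\text{coh}(\tilde E))$, the range of $k$ --- hence the length of the string --- being exactly the discrepancy-type integer that measures the failure of $K_X+B \ge f^*(K_Y+C)$ to be an equality. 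Composing the two layers yields the asserted $l$ copies of $D^b(\text{coh}(\tilde F))$ followed by $\Phi(D^b(\text{coh}(\tilde Y)))$, with $\Phi = Rf_*Lf^*$; full faithfulness of $\Phi$ follows from $Rf_*\mathcal O_{\tilde X} \cong \mathcal O_{\tilde Y}$ as in Example~\ref{qcoh}, and the semiorthogonality is the vanishing of the pertinent relative $\text{Ext}$-groups, which is precisely the cohomological content of the discrepancy inequality via relative Serre duality along the fibers of $f\vert_E$.

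Case (1) is a root-stack phenomenon: when $B \ge C$ with $\text{Supp}(B-C) = F$ prime, the stack $\tilde X$ is, \'etale-locally along $F$, a root stack (of $\tilde Y$, or of a common refinement of $\tilde X$ and $\tilde Y$ over $X$) in the direction of $F$, the multiplicity being governed by the jump of the standard coefficient along $F$; one then invokes the root-stack decomposition $D^b(\text{coh}(\sqrt[r]{F/\tilde Y})) = \langle D^b(\text{coh}(\tilde F)),\dots,D^b(\text{coh}(\tilde F)),D^b(\text{coh}(\tilde Y))\rangle$ with $r-1$ copies of $D^b(\text{coh}(\tilde F))$ twisted by powers of the tautological line bundle, from which $l$ is read off. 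Case (4), the flip, I would handle by variation of GIT: present $\tilde X$ and $\tilde Y$ as $[\mathbf{A}^N /\!/_\theta T]$ and $[\mathbf{A}^N /\!/_{\theta'} T]$ for $\theta, \theta'$ in adjacent chambers separated by the wall responsible for the flip, and choose a grade-restriction ``window'' inside $D^b(\text{coh}([\mathbf{A}^N/T]))$; restriction to the respective semistable loci identifies the window, and a character-twisted version of it, with $D^b(\text{coh}(\tilde X))$ and $D^b(\text{coh}(\tilde Y))$ respectively, and comparing the two windows across the wall produces the semi-orthogonal decomposition, whose interior pieces are indexed by the $T$-weights on the normal directions of the unstable strata --- and those strata are exactly the components $E_j$, whose Mori fiber space structures over $F_j$ contribute, again by the Mori fiber space theorem, the copies of $D^b(\text{coh}(\tilde F_j))$. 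The hypothesis $\alpha^*(K_X+B) \ge \beta^*(K_Y+C)$ is what forces the $Y$-window to sit inside the $X$-window rather than the other way round, fixing the direction of the decomposition (and degenerating to an equivalence in the flop case of equality).

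The principal difficulty is case (4), and within it the simultaneous bookkeeping of the several exceptional components $E_j$: one must exhibit a single linear order on the collection $\{\Phi_{i,j}(D^b(\text{coh}(\tilde F_j)))\}_{i,j}$ making all the relevant $\text{Hom}$'s between distinct pieces vanish, and show that, together with $\Phi(D^b(\text{coh}(\tilde Y)))$, these generate --- which amounts to a Koszul-type resolution of $\mathcal O_{\tilde X}$ inside the ambient quotient stack combined with a weight count verifying that nothing is omitted and nothing counted twice. A second, more routine but still delicate, point is globalization: the Fourier-Mukai kernels, the root-stack line bundles, and the GIT windows must be chosen canonically enough --- depending only on the intrinsic toroidal data of the pair, not on the chosen chart or presentation --- that the local decompositions patch to a genuine semi-orthogonal decomposition of $D^b(\text{coh}(\tilde X))$ over all of $X$.
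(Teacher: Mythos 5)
Your overall architecture (same auxiliary pairs $F$, $F_j$ with boundaries fixed by adjunction, reduction to local toric models, then gluing) matches the paper, but the one case the paper actually proves in detail is case (1), and there your argument has a genuine gap. Writing $B=\sum(1-1/r_i)D_i$, $C=\sum(1-1/s_i)D_i$ with $r_1\ge s_1$ along $F=D_1$, the stack $\tilde X$ is a root stack of $\tilde Y$ along $F$ only when $s_1$ divides $r_1$; for, say, $(r_1,s_1)=(5,2)$ there is no such root construction, and passing to a ``common refinement'' does not help, since that refinement is a root stack \emph{of both} $\tilde X$ and $\tilde Y$ and does not by itself produce a semi-orthogonal decomposition of $D^b(\text{coh}(\tilde X))$ with $D^b(\text{coh}(\tilde Y))$ as a piece. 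The paper instead embeds $D^b(\text{coh}(\tilde Y))$ by the rounding rule $m_1\mapsto\ulcorner m_1r_1/s_1\urcorner$ on toroidal line bundles (full faithfulness rests on the identity $\ulcorner (m_i-m'_i)/s_i\urcorner=\ulcorner(\ulcorner m_ir_i/s_i\urcorner-\ulcorner m'_ir_i/s_i\urcorner)/r_i\urcorner$), and the complement consists of $r_1-s_1$ copies of $D^b(\text{coh}(\tilde F))$ twisted by $\mathcal{O}_{\tilde X}(-k\tilde D_{X,1})$ for $k$ in the generally \emph{non-consecutive} set $\Lambda$; moreover these copies are of $\tilde F=[H_1/\bar G]$, not of the stacky divisor $\tilde D_{X,1}\cong[H_1/G]$ that a root-stack (or blow-up-type) formula would give, so the extra step of decomposing along characters of the inertia group $I$ — the content of the paper's lemma that $\Phi_0=\beta_*\alpha^*$ is fully faithful — is exactly what your sketch omits. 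Since the other cases are quoted from \cite{toric}, \cite{toricII}, \cite{toricIII}, this omitted computation is precisely the new content of the theorem.

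Two further points. In cases (2)–(3) you claim full faithfulness of $\Phi$ ``from $Rf_*\mathcal{O}_{\tilde X}\cong\mathcal{O}_{\tilde Y}$ as in Example~\ref{qcoh}'': that example concerns unbounded quasi-coherent categories (which the paper explicitly says are too big to be relevant), and in any case there is in general no morphism of stacks $\tilde X\to\tilde Y$ because the two stack structures come from different pairs; the functor must be given by a kernel (fiber product of stacks), and its full faithfulness is what the paper checks locally via \cite{Bridgeland1}, Theorem 2.3, together with the explicit toric computations of the cited papers. Your VGIT/window treatment of case (4) is a legitimately different route from the paper's (which simply extends the toric flip results and observes that the exceptional locus may now be reducible, whence several $F_j$), but you yourself flag, without resolving, the globalization of the windows from charts to the toroidal situation — the paper's mechanism for this (globally defined kernels, locally verified full faithfulness) is the missing ingredient there as well.
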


\begin{proof}
The proofs in \cite{toric} and \cite{toricII} on the toric case can be extended to the toroidsl case, 
because the fully faithfulness of functors can be checked locally by \cite{Bridgeland1}~Theorem~2.3.

In the case (4), we note that the exceptional locus of $f_1$ is irreducible in the toric case, but 
it is not necessary the case in the toroidal case.
Therefore we need possibly more than one $F_j$'s. 

In the case (1), the semi-orthogonal complement of the image $\Phi(D^b(\text{coh}(\tilde Y)))$ is not described 
in \cite{logcrep}~Theorem~4.2.
So we do this here.

We use the notation of loc cit.
We have $B = \sum (1-1/r_i)D_i$, $C = \sum (1-1/s_i)D_i$, and $r_i \ge s_i$ with equalities for
$i \ne 1$.
We set $F = D_1$.
A toroidal line bundle on $\tilde Y$ is in the form $L(m) = \mathcal{O}_{\tilde Y}(m)$ with $m = \sum (m_i/s_i)v_i^*$, 
where $\{v_i^*\}$ is the basis of monomials on $X=Y$ which are dual to the vectors $\{v_i\}$ 
corresponding to the divisors $\{D_i\}$.
Thus $\mathcal{O}_{\tilde Y}(m) = \mathcal{O}_{\tilde Y}(- \sum (m_i/s_i)D_i) = 
\mathcal{O}_{\tilde Y}(- \sum m_i \tilde D_{Y,i})$,
where the $\tilde D_{Y,i}$ are prime divisors on $\tilde Y$ above the $D_i$.
We have $H^0(\tilde Y, \mathcal{O}_{\tilde Y}(m)) \cong H^0(Y, \mathcal{O}_Y(-\sum \ulcorner m_i/s_i \urcorner D_i))$.
We have similar expressions on $\tilde X$ and $\tilde D_{X,i}$.

The image of $L(m)$ by $\Phi$ is a line bundle $\Phi(L(m)) = \mathcal{O}_{\tilde X}(\phi(m))$ on $\tilde X$ with 
$\phi(m) = \sum (\ulcorner m_ir_i/s_i \urcorner /r_i)v_i^*$. 
It is already proved that $\Phi$ is a fully faithful functor, because
$\ulcorner (m_i - m'_i)/s_i \urcorner
= \ulcorner (\ulcorner m_ir_i/s_i \urcorner - \ulcorner m'_ir_i/s_i \urcorner)/r_i \urcorner$.

The lattice corresponding to the toroidal variety $F = D_1$ is given by $N_F = N_X/\mathbf{Z}v_1$, where $N_X$ is the
lattice for $X$.
Let $t_i$ be the integers such that $v_i \text{ mod } \mathbf{Z}v_1 = t_i\bar v_i$ for $i \ne 1$, 
where the $\bar v_i$ are primitive vectors in $N_F$.
Then we define the boundary $\mathbf{Q}$-divisor on $F$ by $B_F = \sum_{i \ne 1} (1-1/r_it_i)D_{F,i}$ 
with the set theoretic intersection $D_{F,i} = D_i \cap F$. 
Let $\tilde F$ be the Deligne-Mumford stack associated to the pair $(F, B_F)$.
Let $\tilde D_{F,i}$ be prime divisors on $\tilde F$ above the $D_{F,i}$ for $i \ne 1$.

A line bundle on $\tilde F$ is of the form $L_F(m) = \mathcal{O}_{\tilde F}(m)$ 
with $m = \sum_{i \ne 1} (m_i/r_it_i)\bar v_i^*$, 
where $\{\bar v_i^*\}$ is the basis of monomials on $F$ which are dual to the vectors $\{\bar v_i\}$ 
corresponding to the divisors $\{D_{F,i}\}$.
Let $\tilde E = \tilde F \times_X \tilde X$ be the fiber product with projections
$\alpha: \tilde E \to \tilde F$ and $\beta: \tilde E \to \tilde X$.

We give more geometric descriptions of the above local models.
Let $M = \mathbf{C}^n$, and let an abelian group $G$ act on $M$ faithfully and diagonally, such that 
$X = \mathbf{C}^n/G$ is the quotient variety.
The associated stack $\tilde X = [\mathbf{C}^n/G]$ is a global quotient stack.
The divisors $D_i$ are the images of the coordinate hyperplanes $H_i$ of $M$.
We have $F = D_1$.
Let $I = \{g \in G \mid g \vert_{H_1} = \text{Id}\}$.
Then $\bar G = G/I$ acts faithfully on $H_1$, and $F = H_1/\bar G$.
We have $\tilde F = [H_1/\bar G]$ and $\tilde E = [H_1/G]$. 

We consider a functor $\Phi_0 = \beta_*\alpha^*: D^b(\text{coh}(\tilde F)) \to D^b(\text{coh}(\tilde X))$:

\begin{Lem}
$\Phi_0$ is fully faithful.
\end{Lem}

\begin{proof}
We have $\beta: \tilde E \cong \tilde D_{X,1}$ and 
$\Phi_0(L_F(m)) = \mathcal{O}_{\tilde D_{X,1}}(-\sum_{i \ne 1} m_i\tilde D_{X,i})$.
For $L_F(m) = \mathcal{O}_{\tilde F}(m)$ and $L_F(m') = \mathcal{O}_{\tilde F}(m')$ 
with $m = \sum_{i \ne 1} (m_i/r_it_i)\bar v_i^*$ and $m' = \sum_{i \ne 1} (m'_i/r_it_i)\bar v_i^*$, we have
\[
\begin{split}
&\text{Hom}(L_F(m),L_F(m')) 
\cong H^0(\tilde F, \mathcal{O}_{\tilde F}(m'-m)) \\
&\cong H^0(F, \mathcal{O}_F(-\sum_{i \ne 1} \ulcorner (m_i'-m_i)/r_it_i \urcorner D_{F,i}))
\cong H^0(\tilde D_{X,1}, \mathcal{O}_{\tilde D_{X,1}}(-\sum_{i \ne 1} (m'_i - m_i)\tilde D_{X,i}) \\
&\cong \text{Hom}(\Phi_0(L_F(m)),\Phi_0(L_F(m'))).
\end{split}
\]
\end{proof}

Let $\Lambda = \{k \in \mathbf{Z} \mid 0 \le k < r_1\} \setminus \{k \mid \exists m_1, k = \ulcorner m_1r_1/s_1 \urcorner\}$. 
For each $k \in \Lambda$, we define a fully faithful functor 
$\Phi_k: D^b(\text{coh}(\tilde F)) \to D^b(\text{coh}(\tilde X))$
by $\Phi_k(a) = \Phi_0(a) \otimes \mathcal{O}_{\tilde X}(-k\tilde D_{X,1})$.
We note that 
$\mathcal{O}_{\tilde D_{X,1}}(-r_1\tilde D_{X,1})$ is the image of an invertible sheaf on $\tilde F$ by $\Phi_0$,
so that we put the condition $0 \le k < r_1$.

The assertion of the theorem is proved by the following lemma.
\end{proof}

\begin{Lem}
(1) $\text{Hom}(a,b[p]) = 0$ for all $p$ 
if $a \in \Phi(D^b(\text{coh}(\tilde Y)))$ and $b \in \Phi_k(D^b(\text{coh}(\tilde F)))$.

(2) $\text{Hom}(a,b[p]) = 0$ for all $p$
if $a \in \Phi_k(D^b(\text{coh}(\tilde F)))$ and $b \in \Phi_{k'}(D^b(\text{coh}(\tilde F)))$ with $k < k'$.

(3) $D^b(\text{coh}(\tilde X))$ is generated by $\Phi(D^b(\text{coh}(\tilde Y)))$ and the 
$\Phi_k(D^b(\text{coh}(\tilde F)))$. 
\end{Lem}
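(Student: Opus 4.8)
The plan is to verify the three conditions for a semi-orthogonal decomposition directly, using the explicit toric/toroidal line-bundle descriptions set up above. Since $D^b(\mathrm{coh}(\tilde X))$, $D^b(\mathrm{coh}(\tilde Y))$ and $D^b(\mathrm{coh}(\tilde F))$ are each generated by the toroidal line bundles $\mathcal{O}_{\tilde X}(\phi(m))$, $L(m)$ and $L_F(m)$ respectively, it suffices to check all Hom-vanishings and the generation statement on these generators; everything then follows by taking cones and shifts.

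First I would treat part (1). Fix $a = \Phi(L(m)) = \mathcal{O}_{\tilde X}(\phi(m))$ and $b = \Phi_k(L_F(m')) = \mathcal{O}_{\tilde D_{X,1}}(-\sum_{i\ne 1} m'_i \tilde D_{X,i} - k\tilde D_{X,1})$. Since $\Phi(L(m))$ is locally free on $\tilde X$, $\mathrm{Hom}(a,b[p])$ is the hypercohomology of $b \otimes a^{\vee}$, which is a line bundle on the divisor $\tilde D_{X,1}$ twisted so that its restriction-to-$\tilde D_{X,1}$ of $\phi(m)$ contributes a term $\ulcorner m_1 r_1/s_1\urcorner$ in the first coordinate. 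The point is that, by the definition of $\Lambda$, the integer $k$ is \emph{never} of the form $\ulcorner m_1 r_1/s_1\urcorner$, so after subtracting the ambient twist one lands strictly between two consecutive "attainable" levels; combined with the fact that $\mathcal{O}_{\tilde D_{X,1}}(-r_1\tilde D_{X,1})$ descends to $\tilde F$ (so levels repeat with period $r_1$), this forces the relevant cohomology groups on the quotient $F = H_1/\bar G$ to vanish in all degrees. Concretely this is the $\tilde F$-cohomology of $\mathcal{O}_F(-\sum \ulcorner(\ast)/r_i t_i\urcorner D_{F,i})$ with a first-coordinate fractional part that can never become integral, which vanishes for affine-variety/toric reasons.

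For part (2), with $a = \Phi_k(L_F(m))$ and $b = \Phi_{k'}(L_F(m'))$, $k<k'$, both objects are supported on $\tilde D_{X,1}$, so one computes $\mathrm{Hom}$ there. Using $\beta\colon \tilde E\cong \tilde D_{X,1}$ and the identification of $\Phi_0$ as $\beta_*\alpha^*$, the Hom-complex becomes (after accounting for the normal-bundle contribution of $\tilde D_{X,1}$ inside $\tilde X$, i.e. a shift by $1$ and a twist by $\mathcal{O}_{\tilde D_{X,1}}(-\tilde D_{X,1})$ or its derived tensor with the self-intersection) a cohomology group on $\tilde F$ of the form $H^{\bullet}(\tilde F, L_F(m'-m)\otimes(\text{twist by }(k-k')\tilde D_{X,1}))$; since $0 < k'-k < r_1$ the $\tilde D_{X,1}$-twist is again "non-liftable" to $\tilde F$ and the group vanishes. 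I expect the delicate bookkeeping here to be the contribution of the conormal bundle and the need to possibly use the two-term Koszul resolution $\mathcal{O}_{\tilde X}(-\tilde D_{X,1})\to \mathcal{O}_{\tilde X}\to \mathcal{O}_{\tilde D_{X,1}}$; this is the main obstacle, since it is where one must be careful that no extra cohomology creeps in.

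Finally, for part (3) I would argue by generation. The line bundles $\mathcal{O}_{\tilde X}(-\sum c_i \tilde D_{X,i})$ with $0 \le c_1 < r_1$ (and $c_i\in\mathbf{Z}$ for $i\ne 1$) already generate $D^b(\mathrm{coh}(\tilde X))$, since $\mathcal{O}_{\tilde X}(-r_1\tilde D_{X,1})\cong\mathcal{O}_{\tilde X}$ (the coordinate $x_1$ is $G$-semi-invariant of the appropriate weight, so $r_1\tilde D_{X,1}$ is the pullback of a Cartier divisor trivial on the affine $X$). For each fixed $c_1 = k$: if $k$ is attainable, $k = \ulcorner m_1 r_1/s_1\urcorner$, then $\mathcal{O}_{\tilde X}(-k\tilde D_{X,1})\otimes(\text{line bundle pulled back from }\tilde Y)$ lies in $\Phi(D^b(\mathrm{coh}(\tilde Y)))$; if $k\in\Lambda$, then using the Koszul triangle $\mathcal{O}_{\tilde X}(-\tilde D_{X,1})\to\mathcal{O}_{\tilde X}\to\mathcal{O}_{\tilde D_{X,1}}$ one reduces any such line bundle, modulo the subcategories $\Phi_k(D^b(\mathrm{coh}(\tilde F)))$ (which contain exactly the sheaves on $\tilde D_{X,1}$ twisted by $-k\tilde D_{X,1}$), to line bundles with strictly fewer or differently-placed $D_{X,1}$-twists, eventually landing in $\Phi(D^b(\mathrm{coh}(\tilde Y)))$. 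An induction on $k$ then shows that $\Phi(D^b(\mathrm{coh}(\tilde Y)))$ together with the $\Phi_k(D^b(\mathrm{coh}(\tilde F)))$, $k\in\Lambda$, generate $D^b(\mathrm{coh}(\tilde X))$, which together with (1) and (2) completes the proof.
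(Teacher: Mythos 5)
Your parts (1) and (3) follow essentially the paper's own route: for (1) the paper's argument is exactly the sheaf-level vanishing $\alpha_*\mathcal{O}_{\tilde D_{X,1}}(-\sum_i m_i\tilde D_{X,i})\cong 0$ when $r_1\nmid m_1$, combined with the fact that the attainable values $\ulcorner m_1r_1/s_1\urcorner$ are stable under adding multiples of $r_1$, so $k\in\Lambda$ forces $r_1\nmid(k-\ulcorner m_1r_1/s_1\urcorner)$; note the vanishing of all $\mathrm{Hom}(a,b[p])$ comes from the pushforward sheaf being zero, not from ``affine-variety reasons''. For (3) the paper uses precisely your Koszul/restriction triangles to climb from an arbitrary first coefficient to an attainable one; your auxiliary claim $\mathcal{O}_{\tilde X}(-r_1\tilde D_{X,1})\cong\mathcal{O}_{\tilde X}$ is not correct on the stack (this line bundle differs from $\mathcal{O}_{\tilde X}$ by the character of $G$ by which $x_1^{r_1}$ transforms; it is only $\mathcal{O}_{\tilde D_{X,1}}(-r_1\tilde D_{X,1})$ that descends to $\tilde F$), but this claim is inessential since the induction on the first coefficient already terminates at the next attainable value, which lies within distance $r_1$.

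The genuine gap is in (2), exactly at the point you flag and leave unresolved. Computing $\mathrm{RHom}$ between two sheaves supported on $\tilde D_{X,1}$ via the two-term Koszul resolution produces two contributions, with first-coordinate twists $k'-k$ and $k'-k-1$; the weight argument ($\alpha_*=0$ unless $r_1$ divides the first coefficient) kills the first term, but it does \emph{not} kill the second when $k'=k+1$, which happens for consecutive elements of $\Lambda$. In that case one has $\mathcal{E}xt^1(\Phi_k(c),\Phi_{k+1}(c))\cong\mathcal{O}_{\tilde D_{X,1}}$, so the local-to-global sequence gives $\mathrm{Ext}^1(\Phi_k(c),\Phi_{k+1}(c))\cong H^0(\tilde F,\mathcal{O}_{\tilde F})\neq 0$ in the local model; concretely this is the class of the twist by $\Phi_k$ of the nonsplit extension $0\to\mathcal{O}_{\tilde D_{X,1}}(-\tilde D_{X,1})\to\mathcal{O}_{2\tilde D_{X,1}}\to\mathcal{O}_{\tilde D_{X,1}}\to 0$. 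So no amount of careful bookkeeping makes your argument close in the direction you set it up: the two Koszul twists are simultaneously prime to $r_1$ only for Homs going from the larger value of $k$ to the smaller one (there the surviving coefficients are $k-k'$ and $k-k'+1$, and neither is divisible by $r_1$ because $0\notin\Lambda$), i.e.\ the semiorthogonality among the $\Phi_k$'s holds with the blocks ordered by decreasing twist. You need to carry out this computation and fix the ordering accordingly; as written, your proof of (2) asserts a vanishing that the mechanism you invoke does not deliver.
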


\begin{proof}
(1) We have $\alpha_*\mathcal{O}_{\tilde D_{X,1}}(- \sum_i m_i \tilde D_{X,i}) \cong 0$ if $r_1 \not\vert m_1$.
Therefore
\[
\text{Hom}(\mathcal{O}_{\tilde X}(-\sum_i \ulcorner m_ir_i/s_i \urcorner \tilde D_{X,i}), 
\mathcal{O}_{\tilde D_{X,1}}(-k\tilde D_{X,1} - \sum_{i \ne 1} m'_i \tilde D_{X,i})[p]) = 0
\]
for all $p$. 

(2) There are exact sequences
\[
\begin{split}
&0 \to \mathcal{O}_{\tilde X}(-(k+1)\tilde D_{X,1} - \sum_{i \ne 1} m_i \tilde D_{X,i}) \to 
\mathcal{O}_{\tilde X}(-k\tilde D_{X,1} - \sum_{i \ne 1} m_i \tilde D_{X,i}) \\
&\to \mathcal{O}_{\tilde D_{X,1}}(-k\tilde D_{X,1} - \sum_{i \ne 1} m_i \tilde D_{X,i}) \to 0.
\end{split}
\]
Therefore the subcategory generated by $\Phi(D^b(\text{coh}(\tilde Y)))$ and the 
$\Phi_k(D^b(\text{coh}(\tilde F)))$ contains
$\mathcal{O}_{\tilde X}(- \sum_i m_i \tilde D_{X,i})$ for all $m_i$'s, thus coincides
with the whole category.
\end{proof}

\section{known results}

We collect known results which are used in the proof of our result.
We use the terminology of the minimal model program (MMP) as in \cite{KMM}.

We start with the derived McKay correspondence theorem 
for $SL(3,\mathbf{C})$ by Bridgeland-King-Reid:

\begin{Thm}[\cite{BKR}]\label{BKR}
Let $G \subset SL(3,\mathbf{C})$ be a finite subgroup which acts naturally on $M = \mathbf{C}^3$.
Let $X = M/G$ be the quotient space.
Let $Y = G\text{-Hilb}(M)$ be the closed subscheme of the Hilbert scheme of $M$ consisting of 
$G$-invariant $0$-dimensional subschemes $Z \subset M$ 
such that $H^0(\mathcal{O}_Z) \cong \mathbf{C}[G]$ as
$G$-modules.
Let $\mathcal{Z} \subset Y \times M$ be the universal closed subscheme with natural morphisms
$p: \mathcal{Z} \to Y$ and $q: \mathcal{Z} \to M$. 
Then $Y$ is smooth and the natural morphism $f: Y \to X$, called the Hilbert-Chow morphism, 
is a crepant resolution of singularities, i.e., a projective birational morphism such that $K_Y = f^*K_X$.
Moreover there is an equivalence of triangulated categories
\[
\Phi = Rq_*p^*: D^b(\text{coh}(Y)) \to D^b(\text{coh}^G(M)).
\]
\end{Thm}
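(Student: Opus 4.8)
The plan is to prove the equivalence by the Fourier--Mukai spanning-class method, as in Bridgeland--King--Reid. The formal set-up comes first: since $Y = G\text{-Hilb}(M)$ is a fine moduli space of $G$-clusters, the universal subscheme $\mathcal{Z} \subset Y \times M$ exists, is finite and flat over $Y$, and $p_*\mathcal{O}_{\mathcal{Z}}$ is a locally free sheaf of rank $|G|$ which is fibrewise isomorphic to the regular representation $\mathbf{C}[G]$; thus $\mathcal{O}_{\mathcal{Z}}$ is a $G$-equivariant Fourier--Mukai kernel and $\Phi = Rq_*p^*$ is a well-defined exact functor $D^b(\text{coh}(Y)) \to D^b(\text{coh}^G(M))$ with left and right adjoints. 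The Hilbert--Chow morphism $f: Y \to X$ is projective and restricts to an isomorphism over the locus of free orbits, hence is birational. Finally, because $G \subset SL(3,\mathbf{C})$, the canonical sheaf $\omega_M$ is $G$-equivariantly trivial, so $D^b(\text{coh}^G(M)) = D^b(\text{coh}([M/G]))$ is a Calabi--Yau category of dimension $3$: its Serre functor is $[3]$. This last point is what lets one upgrade \emph{fully faithful} to \emph{equivalence}.

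Next I would apply Bridgeland's criterion built on the spanning class $\{\mathcal{O}_y\}_{y \in Y}$ of closed-point skyscrapers. For a Fourier--Mukai functor it suffices to check that $\text{Hom}(\Phi\mathcal{O}_{y_1}, \Phi\mathcal{O}_{y_2}[i]) = 0$ for all $i$ whenever $y_1 \neq y_2$, that $\text{Hom}(\Phi\mathcal{O}_y,\Phi\mathcal{O}_y) = \mathbf{C}$, and the vanishing $\text{Hom}(\Phi\mathcal{O}_y,\Phi\mathcal{O}_y[i]) = 0$ outside $0 \le i \le \dim Y$. Granting these, $\Phi$ is fully faithful; then, its essential image being an admissible subcategory of the Calabi--Yau category $D^b(\text{coh}^G(M))$, the right orthogonal of the image is stable under the Serre functor, and one checks (using that the objects $\Phi\mathcal{O}_y$ are supported over all of $X$) that this orthogonal is zero, so $\Phi$ is an equivalence. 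That $Y$ is smooth and $f$ is crepant are then consequences: an equivalence with a category of finite homological dimension forces $Y$ to have finite global dimension, hence to be nonsingular, and compatibility of $\Phi$ with the two Serre functors forces $\omega_Y \cong f^*\omega_X$, i.e. $K_Y = f^*K_X$. (Logically one runs this slightly in reverse, establishing irreducibility and the expected dimension of $G\text{-Hilb}(M)$ first so that $\dim Y$ makes sense.)

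The crux is verifying the orthogonality. When $f(y_1) \neq f(y_2)$, the complexes $\Phi\mathcal{O}_{y_1}$ and $\Phi\mathcal{O}_{y_2}$ are supported on disjoint $G$-orbits in $M$, so all $\text{Hom}$'s vanish automatically. The genuine case is $f(y_1) = f(y_2)$ --- in particular the diagonal case $y_1 = y_2$ --- and here, after base change, everything is controlled by the single geometric inequality
\[
\dim\bigl(Y \times_X Y\bigr) \;\le\; \dim Y + 1 \;=\; 4,
\]
the fibre product being taken reduced. This is exactly where $\dim M = 3$ is used: the diagonal component of $Y \times_X Y$ has dimension $3$; over the one-dimensional part of the exceptional locus of $f$ the general fibre is a curve, contributing at most $1 + 1 + 1 = 3$; and over the finitely many points of $X$ where the fibre of $f$ jumps to dimension $2$ the contribution is at most $0 + 2 + 2 = 4$. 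Feeding this bound into the criterion, via a local-to-global argument of the kind used in the proof of the new intersection theorem, produces precisely the required $\text{Ext}$-vanishing for $i$ outside $[0,3]$ when $y_1 \neq y_2$, while the local computation at a general point of $Y$, transported along $Y$ by the same bound, gives $\text{Hom}^0(\Phi\mathcal{O}_y, \Phi\mathcal{O}_y) = \mathbf{C}$.

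I expect the main obstacle to be precisely this dimension estimate and the bookkeeping that turns it into $\text{Ext}$-vanishing: one has to understand how the fibres of the Hilbert--Chow morphism degenerate, prove that $G\text{-Hilb}(M)$ is irreducible of dimension $3$ by ruling out spurious components not dominating $X$ (again with a fibre-dimension bound), and treat the borderline two-dimensional fibres with care. It is this estimate that collapses as soon as $\dim M \geq 4$ or $G \not\subset SL$; in the $GL(3,\mathbf{C})$ situation of the present paper the fibre product overshoots $\dim Y + 1$, and the excess is exactly what is recorded by the extra semi-orthogonal factors $\Phi_i(D^b(\text{coh}(Z_i)))$ in Theorem~\ref{main}, rather than collapsing to a single equivalence.
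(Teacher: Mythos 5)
This statement is quoted in the paper as a known result with a citation to Bridgeland--King--Reid; the paper itself gives no proof, and your sketch is essentially the original BKR argument (Fourier--Mukai kernel $\mathcal{O}_{\mathcal{Z}}$, the skyscraper spanning class, the intersection-theorem machinery driven by the bound $\dim(Y\times_X Y)\le \dim Y+1$, and the equivariantly trivial $\omega_M$ to pass from fully faithful to an equivalence, with smoothness of $Y$ and crepancy extracted along the way), so it matches the intended proof. One small caveat: in the $GL(3,\mathbf{C})$ setting what breaks first is the $G$-equivariant triviality of $\omega_M$ (hence the Calabi--Yau/Serre-functor step and crepancy), not necessarily the dimension estimate itself, which is why the present paper replaces the equivalence by a semi-orthogonal decomposition.
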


The following is one of the main results of the Minimal Model Program (MMP):

\begin{Thm}[\cite{BCHM}]\label{BCHM}
Let $(X,B)$ be a KLT pair consisting of a normal variety and an $\mathbf{R}$-divisor, 
and let $f: X \to S$ be a projective morphism to another variety.
Assume that $B$ is big over $S$.
Then there is an MMP for $(X,B)$ over $S$ which terminates to yield a minimal model or a Mori fiber space.
\end{Thm}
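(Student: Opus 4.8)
The plan is to follow the strategy of Birkar--Cascini--Hacon--McKernan: rather than proving the statement in isolation, I would set up a simultaneous induction on $\dim X$ of a whole package of assertions — (a) existence and termination of the MMP with scaling for KLT pairs with big boundary, (b) existence of log flips, (c) a non-vanishing statement, and (d) finiteness of weak log canonical models as the boundary varies in a rational polytope — and show that each in dimension $n$ follows from the full package in dimension $\le n-1$ together with the remaining statements in dimension $n$. The present theorem is then essentially assertion (a).

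First I would dispose of the Mori fibre space alternative: if $K_X+B$ is not pseudo-effective over $S$, then running a $(K_X+B)$-MMP with scaling of an ample $\mathbf{R}$-divisor $A$ reaches, after finitely many steps, an extremal contraction of fibre type, which is the desired Mori fibre space; if $K_X+B$ is pseudo-effective over $S$, the same MMP with scaling should terminate with $K_X+B$ relatively nef, i.e.\ a minimal model. In both cases the substance is that the MMP can actually be run — the required flips exist — and that the scaling process stops. So the theorem reduces to: (i) construction of log flips for big KLT boundaries, and (ii) termination of the MMP with scaling in that setting.

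For (i) I would localize around a flipping contraction $\phi\colon X\to W$ and use the standard reduction of the flip to finite generation of a restricted algebra $\bigoplus_m \mathrm{im}\bigl(H^0(X,m(K_X+B)) \to H^0(S,mR)\bigr)$ associated to a suitable divisor $S$; here the hypothesis that $B$ is big is exploited, via a tie-breaking / general-position perturbation, to arrange a plt pair with an irreducible component $S$ of coefficient $1$, so that the problem descends to a $(K_S+B_S)$-problem on the lower-dimensional $S$. The engine is an extension theorem lifting pluri-log-canonical sections from $S$ to $X$ (a Siu-type extension argument), combined with the inductive existence of minimal models on $S$ to force the restricted algebra to be finitely generated.

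The hard part, as always in this circle of ideas, is (ii): termination. Unconditional termination of flips is unavailable, so I would instead establish \emph{special termination} — flips eventually do not meet any fixed divisor — from the lower-dimensional package, and then prove termination of the MMP \emph{with scaling} directly, arguing that an infinite such sequence would yield infinitely many distinct marked minimal models for pairs whose boundaries stay in a fixed rational polytope, contradicting the finiteness assertion (d), which itself is deduced from lower-dimensional data together with the already-secured existence of minimal models for big boundaries. Closing this inductive loop without circularity — verifying that no statement in dimension $n$ is invoked to prove itself — is the delicate bookkeeping that constitutes the real work; everything else is either classical cone-and-contraction theory or the extension/finite-generation machinery above.
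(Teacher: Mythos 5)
This statement is not proved in the paper at all: it is quoted verbatim as a known result and attributed to \cite{BCHM}, so there is no internal argument to compare yours against. What you have written is, in outline, the actual strategy of Birkar--Cascini--Hacon--McKernan: the simultaneous induction on dimension over a package of statements (existence of flips, termination of the MMP with scaling, non-vanishing, finiteness of weak log canonical models for boundaries in a rational polytope), the reduction of flips to pl flips and then to finite generation of a restricted algebra via tie-breaking with the big divisor $B$, the Siu-type extension theorem, special termination, and termination with scaling via finiteness of models. So your route is the same as the cited source's, and the identification of where bigness of $B$ enters (perturbation to a plt pair with a component of coefficient one, and finiteness of models) is accurate, as is your dichotomy between the pseudo-effective and non-pseudo-effective cases.

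That said, at this level of granularity your text is a roadmap rather than a proof: every genuinely hard ingredient --- the lifting of pluri-log-canonical sections, finite generation of the restricted algebra, finiteness of log terminal models, non-vanishing, and above all the verification that the induction closes without circularity --- is named but not carried out, and you say as much yourself. Within the context of this paper that is entirely appropriate, since the author also treats Theorem~\ref{BCHM} as a black box; just be aware that nothing in your sketch could be checked independently of \cite{BCHM}. One small notational point: you use $S$ both for the base of the projective morphism $f\colon X \to S$ and for the divisor to which you restrict the algebra; in the pl-flip reduction the latter is a carefully chosen component of the boundary and should get a different name.
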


We note that $B$ is automatically big if $f$ is a birational morphism.
In this case, there is no possibility of a Mori fiber space.
We also need the following result from the MMP:

\begin{Thm}[\cite{length}]\label{length}
Let $f: X \to Y$ be a contraction morphism from a KLT pair $(X,B)$ in the minimal model program.
Then the exceptional locus of $f$, i.e., the closed subset of $X$ consisting of all 
points where $f$ is not an isomorphism,
is covered by a family of rational curves which are mapped to points by $f$.
\end{Thm}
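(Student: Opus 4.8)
The plan is to prove this by the method of deformation and degeneration of rational curves (bend-and-break), in the spirit of Mori and Kawamata; the content needed beyond the cone theorem, which already produces one contracted rational curve spanning each $(K_X+B)$-negative extremal ray, is the \emph{pointed} statement that such a curve can be found through an arbitrary point of the exceptional locus and inside the fibre through it. The assertion is local over $Y$, so I would fix a closed point $y\in Y$ over which $f$ is not an isomorphism, set $F=f^{-1}(y)_{\mathrm{red}}$ (so $\dim F\ge 1$), fix an arbitrary $x\in F$, and aim to produce a rational curve $\ell\subset F$ with $x\in\ell$ and $0<-(K_X+B)\cdot\ell$ bounded by a constant depending only on $\dim X$. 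Since one may first perturb the relatively ample polarization so that the contracted face becomes a ray, I would assume $\rho(X/Y)=1$, so that $-(K_X+B)$ is $f$-ample and every curve in $F$ has negative intersection with $K_X+B$.

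Next I would set up the deformation theory on a resolution. Choosing an irreducible curve $C\subset F$ through $x$ not contained in $\mathrm{Supp}(B)$ (possible after moving $C$, since $\dim F\ge1$), normalizing it to $g\colon \widetilde C\to X$ with a point $\widetilde x$ over $x$, and using that $B$ is effective, one gets $-K_X\cdot g_*[\widetilde C]\ge -(K_X+B)\cdot g_*[\widetilde C]>0$. The standard estimate gives $\dim_{[g]}\mathrm{Mor}(\widetilde C,X;\,\widetilde x\mapsto x)\ge -K_X\cdot g_*[\widetilde C]-\dim X\cdot g(\widetilde C)$, computed after passing to a log resolution of $(X,B)$ so that Riemann--Roch and the cotangent complex behave well, and then transported back to $X$.

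The heart of the matter is Mori's characteristic-$p$ trick: spreading the data out over a finitely generated $\mathbf{Z}$-algebra and base changing to $\overline{\mathbf{F}_p}$, composition of $g$ with a large power of Frobenius multiplies $-K_X\cdot g_*[\widetilde C]$ by $p^e$ without changing the genus, so the pointed deformation space becomes positive-dimensional; bend-and-break then forces the one-parameter family of curves through $x$ to degenerate, and iterating one extracts a rational curve through $x$, contracted by $f$, with $-(K_X+B)$-degree bounded by roughly $\dim X+1$. A spreading-out and countability argument lifts such an $\ell$ back to characteristic zero. Since all curves obtained this way are contracted by $f$ and have bounded $-(K_X+B)$-degree, they belong to a bounded family of curves on $X$, and letting $x$ range over the exceptional locus shows the latter is covered by this family.

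I expect the main obstacle to be the singularities: $(X,B)$ being only KLT, $X$ need not be smooth or Gorenstein, so the deformation estimate and the bend-and-break degeneration must be performed on a log resolution and carefully transported back, and one must guarantee both that the test curve $C$ avoids $\mathrm{Supp}(B)$ and the loci where the resolution is not an isomorphism, and that the rational curve obtained upstairs maps to a genuine rational curve inside $F$. The reduction from a face contraction to an extremal-ray contraction, and the treatment of the case where every curve through $x$ meets $\mathrm{Supp}(B)$, are further technical points but not the core difficulty.
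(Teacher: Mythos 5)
This theorem is not proved in the paper at all: it is quoted as a known result from the cited reference \cite{length} (Kawamata, \emph{On the length of an extremal rational curve}, Invent.\ Math.\ 105 (1991)), so the only proof to compare with is the one in that reference. Your overall strategy --- Mori's characteristic-$p$ deformation argument and bend-and-break, with a degree bound measured against a nef divisor of the shape $-(K_X+B)+f^*(\text{ample on }Y)$ so that the produced rational curve is forced into a fibre and has bounded $-(K_X+B)$-degree --- is indeed the strategy of that paper (and the reduction to $\rho(X/Y)=1$ is automatic, since an MMP contraction is an extremal contraction).

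However, your sketch leaves open exactly the point where the cited proof does its real work, and which you yourself flag as ``the main obstacle'' without resolving it. The estimate $\dim_{[g]}\mathrm{Mor}(\widetilde C,X;\widetilde x\mapsto x)\ge -K_X\cdot g_*[\widetilde C]-g(\widetilde C)\dim X$ is only valid when $X$ is smooth (or at least l.c.i.) along the curve, and a KLT $X$ need not be; worse, the fibre $F$ through $x$ may lie entirely inside $\mathrm{Sing}(X)$ and inside $\mathrm{Supp}(B)$, so your choice of a curve $C\subset F$ through $x$ avoiding $\mathrm{Supp}(B)$ (and the singular locus) may simply not exist --- this is a failure of the construction, not a movable technicality. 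Passing to a log resolution $\mu\colon Z\to X$ does not repair this by mere ``transport'': writing $K_Z=\mu^*(K_X+B)-\mu^{-1}_*B+\sum_j a_jE_j$ with $a_j>-1$, the strict transform $C'$ satisfies $K_Z\cdot C'=(K_X+B)\cdot C-(\mu^{-1}_*B)\cdot C'+\sum_j a_j(E_j\cdot C')$, and exceptional divisors with $a_j\ge 0$ meeting $C'$ can destroy the negativity $K_Z\cdot C'<0$ that bend-and-break needs; when the fibre is one-dimensional there is no room to move $C$ to avoid them. Producing, through a prescribed point over $x$, a curve on $Z$ contracted by $f\circ\mu$ on which $K_Z$ is negative, and ensuring the resulting rational curve is not $\mu$-exceptional (so that it pushes forward to a curve through $x$ contracted by $f$ rather than to the point $x$), is precisely the content of Kawamata's argument and is missing from your proposal; ``carefully transported back to $X$'' names the difficulty rather than closing it. The remaining mechanics you describe (Frobenius, degeneration, spreading out, boundedness of the family and covering of the exceptional locus) are standard once that key step is supplied.
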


Let $X$ be a smooth Deligne-Mumford stack whose coarse moduli space is projective.
Then the {\em Hochschild homology} is defined by
$HH_m(X) = \text{Hom}_{X \times X}(\mathcal{O}_{\Delta}, \omega_{\Delta}[\dim X - m])$,
where $\Delta$ denotes the diagonal and $\omega$ is a canonical sheaf.

Hochschild homology is related to the Hodge decomposition:

\begin{Thm}[Hochschild-Kostant-Rosenberg isomorphism \cite{Caldararu}]\label{HKR}
There is an isomorphism 
$HH_m(X) \cong \bigoplus_{p-q=m} H^q(X,\Omega_X^p)$.
\end{Thm}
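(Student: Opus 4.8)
The plan is to reduce the statement to the \emph{fundamental HKR isomorphism} for the diagonal and then to unwind it by adjunction. Write $n = \dim X$ and let $\Delta: X \to X \times X$ be the diagonal, so that $\mathcal{O}_\Delta = \Delta_*\mathcal{O}_X$ and $\omega_\Delta = \Delta_*\omega_X$; since $\Delta$ is finite (in particular proper and representable), $\Delta_*$ admits the left adjoint $L\Delta^*$ on derived categories. Applying this adjunction with target $\Delta_*(\omega_X[n-m]) = \omega_\Delta[n-m]$ gives
\[
HH_m(X) = \mathrm{Hom}_{X\times X}(\Delta_*\mathcal{O}_X,\, \Delta_*\omega_X[n-m]) \;\cong\; \mathrm{Hom}_X\bigl(L\Delta^*\Delta_*\mathcal{O}_X,\, \omega_X[n-m]\bigr).
\]
Thus everything is controlled by the single object $L\Delta^*\Delta_*\mathcal{O}_X \in D^b(\mathrm{coh}(X))$, and the goal becomes the identification
\[
L\Delta^*\Delta_*\mathcal{O}_X \;\cong\; \bigoplus_{p \ge 0} \Omega^p_X[p].
\]

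For this I would argue locally and then globalize. On an affine scheme $\mathrm{Spec}\,A$ with $A$ smooth over the base field (of characteristic $0$), $L\Delta^*\Delta_*\mathcal{O}$ is computed by $A \otimes^{L}_{A\otimes A} A$, whose cohomology is $\mathrm{Tor}^{A\otimes A}_{\bullet}(A,A)$, and the classical Hochschild--Kostant--Rosenberg theorem identifies its $(-p)$-th term with $\Omega^p_{A}$ via antisymmetrization. The essential point is that in characteristic $0$ these comparison maps are canonical enough to patch, i.e.\ the derived self-intersection of the diagonal is \emph{formal}, so that the local isomorphisms assemble into the displayed direct sum rather than into a nontrivial iterated extension. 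For schemes this is the content of \cite{Caldararu} and the surrounding literature; for a smooth Deligne--Mumford stack I would run the same computation on an \'etale atlas $U \to X$, where $\Delta_U: U \to U \times U$ is a genuine regular closed immersion with conormal sheaf $\Omega^1_U$, and then descend. I expect this globalization --- establishing the formality statement and checking that it is not disturbed by the fact that the diagonal of a Deligne--Mumford stack is representable but not a monomorphism (its classical self-fibre product being the inertia stack) --- to be the main obstacle; the rest is formal.

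Granting the fundamental isomorphism, the conclusion follows by a short computation. From the first displayed isomorphism,
\[
HH_m(X) \;\cong\; \bigoplus_{p \ge 0} \mathrm{Hom}_X\bigl(\Omega^p_X[p],\, \omega_X[n-m]\bigr) \;=\; \bigoplus_{p \ge 0} \mathrm{Ext}^{\,n-m-p}_X(\Omega^p_X, \omega_X).
\]
Since $\Omega^p_X$ is locally free of rank $\binom{n}{p}$, this equals $\bigoplus_{p} H^{\,n-m-p}\bigl(X, (\Omega^p_X)^{\vee}\otimes\omega_X\bigr)$, and the wedge pairing $\Omega^p_X \otimes \Omega^{n-p}_X \to \omega_X$ is perfect, giving $(\Omega^p_X)^{\vee}\otimes\omega_X \cong \Omega^{\,n-p}_X$; equivalently one may invoke Serre duality on the smooth proper stack $X$ together with the Serre functor $S_X = (-)\otimes\omega_X[n]$ recalled above. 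Hence $HH_m(X) \cong \bigoplus_{p} H^{\,n-m-p}(X, \Omega^{\,n-p}_X)$, and re-indexing $p \mapsto n-p$ yields $HH_m(X) \cong \bigoplus_{p-q=m} H^q(X, \Omega^p_X)$, as claimed. Note that the constraint $p-q=m$ is produced precisely by combining the adjunction $L\Delta^* \dashv \Delta_*$ with the Poincar\'e pairing on forms.
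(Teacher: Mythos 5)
The paper offers no proof of this statement to compare against: it is quoted verbatim from \cite{Caldararu}. For the case where $X$ is a smooth projective \emph{variety}, your argument is exactly the standard one behind the cited result: the adjunction $L\Delta^*\dashv\Delta_*$ reduces the claim to the decomposition $L\Delta^*\Delta_*\mathcal{O}_X\cong\bigoplus_p\Omega^p_X[p]$ (Swan--Yekutieli--Kontsevich formality in characteristic $0$), and your concluding duality/re-indexing computation, producing the constraint $p-q=m$, is correct.

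The genuine gap is the Deligne--Mumford case, which is precisely how the paper formulates the statement and how it is used (the SOD components are stacks such as the $\tilde Z_i$). You correctly flag that the diagonal of a DM stack is not a monomorphism and that its classical self-fibre product is the inertia stack $IX$, but you then propose to get past this by computing on an \'etale atlas and descending. That cannot work: the identification $L\Delta^*\Delta_*\mathcal{O}_X\cong\bigoplus_p\Omega^p_X[p]$ is false once $X$ has nontrivial stabilizers, because derived base change along the self-intersection of the diagonal shows that $L\Delta^*\Delta_*\mathcal{O}_X$ is a pushforward from $IX$, so extra ``twisted-sector'' summands appear. Concretely, for $X=BG$ with $G$ finite, $HH_0(BG)$ is the space of class functions on $G$ (dimension the number of conjugacy classes), while $\bigoplus_{p-q=0}H^q(BG,\Omega^p_{BG})=H^0(BG,\mathcal{O}_{BG})\cong\mathbf{C}$; no descent argument can bridge this. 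The correct stacky statement (orbifold HKR, e.g.\ Baranovsky, Arinkin--Caldararu--Hablicsek) has $H^q(IX,\Omega^p_{IX})$ on the right-hand side, and the formula as displayed holds verbatim only when the stack structure is trivial. What your argument does establish is that $\bigoplus_{p-q=m}H^q(X,\Omega^p_X)$ is a canonical direct summand of $HH_m(X)$ (the untwisted sector), which is all that the nonvanishing argument in Step 3 of the paper actually requires; but as a proof of the theorem as stated for DM stacks, the key identification is left unproved and is in fact false without modification.
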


Hochschild homology is additive for SOD:

\begin{Thm}[\cite{Kuz}]\label{HH}
Let $\tilde X$ and $\tilde Y_i$ ($i=1,\dots,l$) be  smooth Deligne-Mumford stacks whose coarse moduli spaces are projective.
Assume that there are fully faithful functors $\Phi_i: D^b(\tilde Y_i) \to D^b(\tilde X)$ 
such that there is a semi-orthogonal decomposition 
$D^b(\tilde X) = \langle \Phi_1(D^b(\tilde Y_1)), \dots, \Phi_l(D^b(\tilde Y_l)) \rangle$.
Then there is an isomorphism of Hochschild homologies 
$HH_m(\tilde X) \cong \bigoplus_{i=1}^l HH_m(\tilde Y_i)$ for all $m$.
\end{Thm}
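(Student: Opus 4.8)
The plan is to follow Kuznetsov's strategy \cite{Kuz}: realize every functor by a Fourier--Mukai kernel, produce a filtration of the diagonal kernel whose graded pieces are the projection kernels of the $\mathcal A_i:=\Phi_i(D^b(\tilde Y_i))$, and then apply Hochschild homology as an exact functor in the kernel variable. \emph{Step 1 (Fourier--Mukai realization).} Since $\tilde X$ and the $\tilde Y_i$ are smooth proper Deligne--Mumford stacks, each fully faithful $\Phi_i$ is a Fourier--Mukai transform with kernel $P_i\in D^b(\tilde Y_i\times\tilde X)$; as $\mathcal A_i$ is admissible, $\Phi_i$ has a right adjoint $\Phi_i^R$, again of Fourier--Mukai type, with kernel $P_i^R$ obtained by (stacky) Grothendieck--Serre duality, and full faithfulness is the relation $P_i^R\circ P_i\cong\mathcal O_{\Delta_{\tilde Y_i}}$ under convolution. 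The projection $D^b(\tilde X)\to\mathcal A_i\hookrightarrow D^b(\tilde X)$ is then the Fourier--Mukai endofunctor with kernel $\mathcal P_i:=P_i\circ P_i^R\in D^b(\tilde X\times\tilde X)$, and semi-orthogonality of the decomposition translates into $\mathcal P_i\circ\mathcal P_i\cong\mathcal P_i$ and $\mathcal P_i\circ\mathcal P_j\cong 0$ for $i\neq j$. For an admissible subcategory $\mathcal A\subset D^b(\tilde X)$ with projection kernel $\mathcal P$ one defines $HH_\bullet(\mathcal A)$ through $\mathcal P$ (equivalently, via a DG enhancement, as Hochschild homology of the corresponding $A_\infty$-category): this recovers $HH_\bullet(\tilde X)$ when $\mathcal P=\mathcal O_{\Delta_{\tilde X}}$, and, because $\Phi_i\colon D^b(\tilde Y_i)\to\mathcal A_i$ is an equivalence and $P_i^R\circ P_i\cong\mathcal O_{\Delta_{\tilde Y_i}}$, it satisfies $HH_\bullet(\mathcal A_i)\cong HH_\bullet(\tilde Y_i)$, Hochschild homology being an invariant of the enhanced category.

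\emph{Step 2 (filtration of the diagonal).} The main step is to construct in $D^b(\tilde X\times\tilde X)$ a chain
\[
0=\mathcal R_0\to\mathcal R_1\to\cdots\to\mathcal R_l=\mathcal O_{\Delta_{\tilde X}},\qquad \mathrm{Cone}(\mathcal R_{i-1}\to\mathcal R_i)\cong\mathcal P_i .
\]
I would build this from the SOD of $D^b(\tilde X)$ by iterating the adjunction unit and counit maps $\mathrm{id}\to\Phi_i\Phi_i^R$, $\Phi_i\Phi_i^R\to\mathrm{id}$, which exhibit $\mathrm{id}_{D^b(\tilde X)}$ as a successive extension of the projections; since all the functors involved are Fourier--Mukai and, on smooth proper stacks, a triangle of Fourier--Mukai functors is induced by a triangle of kernels, these assemble into the displayed filtration of kernels. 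An equivalent route uses the Künneth equivalence $D^b(\tilde X\times\tilde X)\simeq D^b(\tilde X)\otimes_{\mathbf{C}}D^b(\tilde X)$ (available for smooth proper DM stacks via DG enhancements) together with base change for semi-orthogonal decompositions defined by kernels, to obtain an induced SOD of $D^b(\tilde X\times\tilde X)$ with components $D^b(\tilde X)\otimes\mathcal A_i$; decomposing $\mathcal O_{\Delta_{\tilde X}}$ along it gives the filtration, whose $i$-th graded piece is the kernel of the $i$-th component of the identity functor, i.e.\ $\mathcal P_i$.

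\emph{Step 3 (conclusion).} Hochschild homology of a kernel is computed by an exact pairing against $\mathcal O_{\Delta_{\tilde X}}$ through the Serre functor of $\tilde X$, so it is cohomological in the kernel variable; applying it to the filtration of Step 2 yields iterated long exact sequences linking $HH_\bullet(\tilde X)=HH_\bullet(\mathcal O_{\Delta_{\tilde X}})$ to the $HH_\bullet(\mathcal P_i)=HH_\bullet(\mathcal A_i)$. The orthogonality $\mathcal P_i\circ\mathcal P_j\cong 0$ for $i\neq j$ makes the cross terms in these sequences vanish, so they split, giving
\[
HH_m(\tilde X)\;\cong\;\bigoplus_{i=1}^l HH_m(\mathcal A_i)\;\cong\;\bigoplus_{i=1}^l HH_m(\tilde Y_i)
\]
by Step 1.

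The hard part is Step 2. It rests on the Fourier--Mukai formalism for smooth proper Deligne--Mumford stacks --- representability of fully faithful functors, existence and good behaviour of adjoints, and the principle that triangles of Fourier--Mukai functors come from triangles of kernels --- and on the Künneth/DG-enhancement description of $D^b(\tilde X\times\tilde X)$; these hold in the stacky setting but carry the real content, while the remaining steps are formal. As a consistency check I note that, via the Hochschild--Kostant--Rosenberg isomorphism (Theorem~\ref{HKR}), the assertion is equivalent to additivity of $\bigoplus_{p-q=m}H^q(\tilde X,\Omega^p_{\tilde X})$ under the given semi-orthogonal decomposition.
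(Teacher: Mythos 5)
The paper does not prove this statement at all: it is quoted verbatim from Kuznetsov \cite{Kuz} (stated there for smooth projective varieties) and used as a black box, so there is no internal proof to compare with. Your outline is essentially a reconstruction of Kuznetsov's argument, transplanted to Deligne--Mumford stacks, and at the level of a sketch it is the right strategy: kernels for the projection functors, a filtration of $\mathcal{O}_{\Delta_{\tilde X}}$ with graded pieces the projection kernels, and additivity of the kernel-level Hochschild pairing.

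Two points deserve a warning. First, in Step 2 your primary route leans on the principle that ``a triangle of Fourier--Mukai functors is induced by a triangle of kernels''; morphisms and triangles of functors do not in general lift to morphisms and triangles of kernels, and this is exactly the trap Kuznetsov avoids by working on the kernel side from the start: one first proves that the projection functors of an admissible subcategory are themselves of Fourier--Mukai type (a nontrivial theorem, not a formal consequence of representability of $\Phi_i$), and then produces the filtration of the diagonal from the SOD of $D^b(\tilde X\times\tilde X)$ induced by the decomposition of one factor --- i.e.\ your ``equivalent route'' is not an alternative but the actual proof, and it in turn requires the base-change/K\"unneth machinery (via dg enhancements, or Ballard--Favero--Katzarkov in the stacky setting) that you correctly flag as the real content. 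Second, in Step 3 the vanishing of the cross terms is not literally a consequence of $\mathcal P_i\circ\mathcal P_j\cong 0$: what is needed is the vanishing of the Hom-spaces $\mathrm{Hom}_{\tilde X\times\tilde X}(\mathcal P_i,\mathcal P_j\circ S_{\Delta}[\ast])$ entering the pairing that defines $HH$, and this uses semiorthogonality for one ordering of $i,j$ and Serre duality (twisting by the Serre kernel) for the other; stating it as you do skips the step where the second ordering is handled. With those repairs, and with the stacky Fourier--Mukai formalism granted, your outline does yield the theorem in the generality in which the paper invokes it.
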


\section{maximal $\mathbf{Q}$-factorial terminalization}

There exists a {\em minimal $\mathbf{Q}$-factorial terminalization} for any singularities by \cite{BCHM}.
We define {\em maximal $\mathbf{Q}$-factorial terminalization} for KLT pairs in this section.

\begin{Defn}
Let $(X,B)$ be a KLT pair of a normal variety and an $\mathbf{R}$-divisor.
A projective birational morphism $f: Y \to X$ is said to be a {\em maximal $\mathbf{Q}$-factorial terminalization}
if the following conditions are satisfied:

\begin{itemize}
\item $Y$ has only $\mathbf{Q}$-factorial and terminal singularities.

\item The set of exceptional divisors of $f$ coincides with the set of all prime divisors above $X$ whose
coefficients are non-negative.
\end{itemize}
\end{Defn}

For any normal variety $X$, a minimal $\mathbf{Q}$-factorial terminalization
is a projective birational morphism $f: Y \to X$ such that $Y$ has only $\mathbf{Q}$-factorial and terminal singularities
and that $K_Y$ is $f$-nef.
If $K_X$ is a $\mathbf{Q}$-Cartier divisor, then any exceptional divisor of $f$ has non-negative coefficient.
But we may blow up $Y$ further while preserving the non-negativity of the coefficients (see the example below). 
This is why we use the term \lq\lq maximal''.  

A minimal $\mathbf{Q}$-factorial terminalization is not unique in dimension $\ge 3$.
But they are all $K$-equivalent, i.e., the canonical divisors $K_Y$ are equivalent.
Hence they are connected by flops (\cite{flops}), and we expect that they have equivalent derived categories 
if they are suitably defined.
A maximal $\mathbf{Q}$-factorial terminalization is not unique either in dimension $\ge 3$.
But we cannot expect that they are $K$-equivalent, and their derived categories are not expected to be equivalent either.
Our main theorem is valid for one maximal $\mathbf{Q}$-factorial terminalization, but not necessarily for any
other maximal $\mathbf{Q}$-factorial terminalizations.

\begin{Thm}\label{maximal}
Let $(X,B)$ be a $\mathbf{Q}$-factorial KLT pair of a normal variety and an $\mathbf{R}$-divisor.

\begin{enumerate}

\item[(1)] There exists a maximal $\mathbf{Q}$-factorial terminalization of $(X,B)$.

\end{enumerate}

Let $f: Y \to (X,B)$ be any maximal $\mathbf{Q}$-factorial terminalization. 
Denote $K_Y+B_Y+E_Y = f^*(K_X+B)$, where $B_Y$ is the strict transform of $B$.

\begin{enumerate}

\item[(2)]  There is a week MMP for $-(K_Y+B_Y)$ over $X$ in the following sense: 
$f$ is decomposed into a sequence of birational maps
\[
Y = Y_0 \dashrightarrow Y_1 \dashrightarrow \dots \dashrightarrow Y_l = X
\]
such that each $\alpha_i: Y_{i-1} \to Y_i$ is either a divisorial contraction or a flip for some MMP, and that
$K_{Y_{i-1}} + B_{Y_{i-1}} \le K_{Y_i} + B_{Y_i}$ for all $i$, where
the $B_{Y_i}$ are the strict transforms of $B$.

\item[(3)] Any other maximal $\mathbf{Q}$-factorial terminalizations are connected by flops 
with respect to $K_Y+B_Y+E_Y$ over $X$.

\item[(4)] If there is a reduced divisor $\bar B$ such that $\text{Supp}(B) \subset \bar B$ and 
that $(X,\bar B)$ is a toroidal pair,
then all birational maps $\alpha_i$ are toroidal for suitable toroidal pairs,
and any other maximal $\mathbf{Q}$-factorial terminalizations are connected by toroidal flops.
\end{enumerate}
\end{Thm}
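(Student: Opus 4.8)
The plan is to build the maximal $\mathbf{Q}$-factorial terminalization in two stages and then read off (2)--(4) from the MMP machinery. For (1), I would first take a log resolution $g: W \to X$ of $(X, B)$ on which all the finitely many prime divisors above $X$ with non-negative coefficient appear, and write $K_W + B_W + E_W + F_W = g^*(K_X+B)$, separating the strict transform $B_W$ of $B$, the sum $E_W$ of the exceptional (and non-exceptional, but we only need exceptional) divisors with coefficient in $[0,1)$, and the sum $F_W$ of divisors with negative coefficient. The idea is to run a $(K_W + B_W + E_W)$-MMP over $X$ with scaling. Since the pair $(W, B_W + E_W)$ is KLT and the relevant divisor is big over $X$ (as $g$ is birational), Theorem \ref{BCHM} applies and the MMP terminates. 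The key point is to choose the boundary so that precisely the divisors in $E_W$ survive and everything in $F_W$, being negative-coefficient, is contracted: running the MMP contracts exactly the divisors on which $K_W+B_W+E_W$ is not relatively nef, and by a discrepancy computation these are exactly the divisors with negative coefficient in $g^*(K_X+B)$, i.e.\ the divisors \emph{not} in our prescribed list. The output $Y$ is $\mathbf{Q}$-factorial, and it is terminal because after the MMP the pair $(Y, B_Y + E_Y)$ has $K_Y+B_Y+E_Y$ nef over $X$ with all coefficients $< 1$, and more care shows $Y$ itself (ignoring the boundary) has terminal singularities — here one uses that the divisors with coefficient $\ge 0$ are precisely the ones kept, so no divisor above $X$ with non-positive discrepancy for $K_Y$ alone remains to be extracted. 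I expect the bookkeeping of which divisors have which sign of coefficient, and verifying that the MMP does not accidentally contract a divisor we want to keep, to be the main technical obstacle; it requires the observation that the coefficient of a prime divisor in $g^*(K_X+B)$ depends only on its valuation, and that $K_Y+B_Y+E_Y = f^*(K_X+B)$ forces all exceptional divisors of $f$ to have coefficient in $[0,1)$ while all contracted ones had coefficient $<0$.

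For (2), I would observe that the MMP run in (1) is, step by step, a sequence of divisorial contractions and flips $\alpha_i: Y_{i-1} \dashrightarrow Y_i$ with respect to $K + B_Y + E_Y$ (which equals $f^*(K_X+B)$ pulled back, hence is relatively trivial at the end). Along such a step, $K_{Y_{i-1}} + B_{Y_{i-1}} + E_{Y_{i-1}}$ decreases in the usual MMP sense; since $f^*(K_X+B)$ is a fixed pullback and the $E_{Y_i}$ part only loses components (divisors get contracted), one deduces $K_{Y_{i-1}} + B_{Y_{i-1}} \le K_{Y_i} + B_{Y_i}$ by comparing pullbacks to a common resolution — the difference is supported on the contracted exceptional divisors, which carry non-negative coefficients. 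This is precisely the "weak MMP for $-(K_Y+B_Y)$ over $X$" asserted. For (3), given two maximal $\mathbf{Q}$-factorial terminalizations $f: Y \to X$ and $f': Y' \to X$, both extract exactly the same set of prime divisors (those with non-negative coefficient), so the birational map $Y \dashrightarrow Y'$ is an isomorphism in codimension one; since $K_Y + B_Y + E_Y = f^*(K_X+B) = $ the corresponding pullback on $Y'$, the two are $K$-equivalent relative to $X$ with respect to $K + B_Y + E_Y$, and by the theorem on flops in \cite{flops} (or the relevant statement cited earlier) they are connected by a sequence of flops for $K_Y + B_Y + E_Y$ over $X$.

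For (4), I would invoke the toroidal MMP: if $(X, \bar B)$ is toroidal with $\mathrm{Supp}(B) \subset \bar B$, then all the birational operations above can be performed inside the category of toroidal varieties. Concretely, take a toroidal log resolution $W \to X$ (a subdivision of the fan/conical complex), so $E_W$ and $F_W$ are toroidal divisors; the $(K_W + B_W + E_W)$-MMP over $X$ is then a toroidal MMP, which at the level of fans is an explicit combinatorial process (each contraction or flip is a modification of the subdivision), and each $\alpha_i$ is toroidal for the toroidal pairs $(Y_i, \bar B_{Y_i})$ obtained by strict transform. The same holds for the flops in (3): two toroidal maximal terminalizations correspond to two subdivisions extracting the same rays, differing by a sequence of toroidal flops (bistellar-type operations on the fan). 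The only thing to check is that toroidal-ness is preserved at every step, which is standard once one knows the MMP can be run torus-equivariantly; I do not expect (4) to present a serious obstacle beyond citing the toroidal MMP and matching it with the construction in (1).
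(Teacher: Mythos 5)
Your parts (1), (3) and (4) follow essentially the paper's own route: a log resolution on which the finitely many divisors of non-negative coefficient all appear, an MMP over $X$ for the pair whose boundary is the strict transform of $B$ plus the non-negative-coefficient exceptional part (so that the relative log canonical divisor is numerically equivalent to the effective exceptional divisor formed by the negative-coefficient part, which is therefore exactly what gets contracted), terminality of $Y$ from the effectivity of $B_Y+E_Y$, the citation of \cite{flops} for (3), and a toroidal log resolution for (4). These are fine.

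The genuine gap is in (2). The MMP you ran in (1) factors the map from the log resolution $W$ down to $Y$; it says nothing about the morphism $f\colon Y\to X$, which is what (2) asks you to decompose into divisorial contractions and flips ending at $X$. It cannot be repaired by ``continuing'' the same MMP, and your own parenthetical remark shows why: $K_Y+B_Y+E_Y=f^*(K_X+B)$ is numerically trivial over $X$, so there are no negative extremal rays for that divisor and no further steps with respect to it --- this is precisely why the MMP of (1) terminates at $Y$. What is needed, and what the paper does, is a new MMP starting from $Y$ with a perturbed boundary: run the MMP for $(Y,\,B_Y+(1+\epsilon)E_Y)$ over $X$ with $0<\epsilon\ll 1$. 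Since $K_Y+B_Y+(1+\epsilon)E_Y\equiv \epsilon E_Y\equiv -\epsilon(K_Y+B_Y)$ over $X$, every step is a genuine divisorial contraction or flip for this auxiliary pair, it contracts only (and, by the negativity lemma at the end, all) components of $\text{Supp}(E_Y)$, and along it the strict transform of $K+B$ increases, which is exactly the asserted inequality $K_{Y_{i-1}}+B_{Y_{i-1}}\le K_{Y_i}+B_{Y_i}$; this is the sense in which one gets an MMP for $-(K_Y+B_Y)$.

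Even after that, one is not yet at $X$: the exceptional divisors of $f$ with coefficient exactly $0$ and the small part of the exceptional locus survive the first stage. The paper handles these with a second MMP, for $(Y_1,\,B_{Y_1}+\epsilon F)$ over $X$ where $F$ is the exceptional locus of $Y_1\to X$; its steps are crepant for $K+B$ (flops and log crepant divisorial contractions, so the inequality in (2) holds with equality there), and the $\mathbf{Q}$-factoriality of $X$ assumed in the theorem is what forces the final small morphism to be an isomorphism, so the chain really terminates at $Y_l=X$. None of this appears in your proposal, and your claimed deduction of the inequality from the MMP of (1) compares the wrong models, so as written (2) is unproved.
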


\begin{proof}
(1) Let $g: Z \to X$ be a log resolution, and write $K_Z + B_Z + E_Z =  g^*(K_X+B)$, 
where $B_Z$ is the strict transform of $B$.
By blowing up further if necessary, we may assume that irreducible components of $E_Z$ with non-negative 
coefficients are disjoint each other.
Thus all prime divisors over $X$ with non-negative coefficients already appear on $Z$.
We write $E_Z = E^+_Z - E^-_Z$ with effective divisors $E^+_Z,E^-_Z$ without common irreducible components. 

We run an MMP for the KLT pair $(Z,B_Z + E^+_Z)$ over $X$ using \cite{BCHM}.
Since $K_Z+B_Z + E^+_Z \equiv  E^-_Z$ over $X$, all irreducible components of $E^-_Z$ are contracted during the MMP,
while no other prime divisors are contracted, 
and we obtain a maximal $\mathbf{Q}$-factorial terminalization $f: Y \to X$.

(2) We run an MMP for a KLT pair $(Y, B_Y+(1+\epsilon)E_Y)$ over $X$ for a small positive number $\epsilon$.
Since $K_Y+B_Y + (1+\epsilon)E_Y \equiv \epsilon E_Y \equiv - \epsilon (K_Y+B_Y)$ over $X$, all the irreducible 
components of $E_Y$ are contracted, while the strict transforms of $-(K_Y + B_Y)$ are strictly decreasing during the
MMP.
This step is an MMP for $-(K_Y+B_Y)$.

We contract all other exceptional divisors in the next step which consists of flops and log crepant contractions
for the strict transforms of $K_Y + B_Y$.
Let $f_1: (Y_1,B_{Y_1}) \to X$ be the morphism obtained in the previous step.
We have $K_{Y_1}+B_{Y_1} = f_1^*(K_X+B)$.
Let $F$ be the exceptional locus of $f_1$.
We run an MMP for $(Y_1,B_{Y_1}+\epsilon F)$ over $X$ for small $\epsilon$. 
Then all irreducible components of $F$ are eventually contracted.
Since $K_{Y_1}+B_{Y_1}$ is numerically trivial, 
the strict transforms of $K_{Y_1}+B_{Y_1}$ remain constant.
 
(3) Since $K_Y+B_Y+E_Y$ is numerically trivial over $X$, the assertion follows from \cite{flops}.

(4) We take a toroidal log resolution $g: (Y,\bar C) \to (X,\bar B)$
such that $Y \setminus \bar C \cong X \setminus \bar B$.
Then our MMP over $X$ becomes toroidal automatically. 
The connecting flops are also automatically toroidal.
\end{proof}

\begin{Expl}
Let $X$ be a $2$-dimensional cyclic quotient singularity of type $\frac 1{15}(1,4)$.
We consider a pair $(X,0)$.

The minimal resolution $f_1: Y_1 \to X$ has two exceptional curves $C_1,C_2$ with $C_i^2 = -4$.
We have $K_{Y_1}+\frac 23(C_1+C_2)$.
Blow up $Y_1$ at $C_1 \cap C_2$ to obtain $f_2: Y_2 \to Y_1$ with an exceptional curve $C_3$.
Then blow up $C_3 \cap (C'_1 \cup C'_2)$ to obtain $f_3: Y_3 \to Y_2$ with exceptional curves $C_4,C_5$,
where the symbol ${}'$ denotes the strict transform.
Then the composition $f: Y \to X$ is the maximal resolution.
We have $K_Y =  f^*K_X + \sum_{i=1}^5 e_iE_i$ with $e_1=e_2=\frac 23$, $e_3 = \frac 13$ and
$e_4=e_5=0$, where $E_i=C''_i$ for $i=1,2$, $E_3 = C_3'$ and $E_i=C_i$ for $i=4,5$.

If we choose small positive numbers $\epsilon_i$, then we can run 
an MMP for $(Y, E)$ with $E = \sum_i (e_i+\epsilon_i)E_i$ over $X$.
The final output is always $X$ itself, but there are many possibilities of the order of contractions, some of them are 
via the 
minimal resolution and others not. 
\end{Expl}

For a maximal $\mathbf{Q}$-factorial terminalization $f: Y \to X$ of a quotient singularity, 
we have $K_Y \le K_X$.
Therefore we can expect a semi-orthogonal decomposition according to the DK-hypothesis of the form
$D(X) = \langle C,D(Y) \rangle$.
Moreover we expect that the derived category of a maximal $\mathbf{Q}$-factorial terminalization is further 
decomposed into 
the derived category of a minimal $\mathbf{Q}$-factorial terminalization and its right orthogonal, 
because the 
canonical divisor of the minimal $\mathbf{Q}$-factorial terminalization is in general strictly smaller than that of 
the maximal $\mathbf{Q}$-factorial terminalization.

\section{compactification}

We consider a finite subgroup $G \subset GL(n,\mathbf{C})$ acting naturally on $\mathbf{C}^n$ for arbitrary $n$.
The action of $G$ is extended to a compactification $\mathbf{P}^n$ of $\mathbf{C}^n$.

Let $\rho: G \to End(E_{\rho})$ be an irreducible representation of $G$.
We define a corresponding $G$-equivariant locally free sheaf on $\mathbf{P}^n$ by
$\mathcal{V}_{\rho} = E_{\rho} \otimes_{\mathbf{C}} \mathcal{O}_{\mathbf{P}^n}$, where
the action of $G$ is diagonal, i.e., $g(v \otimes s) = g(v) \otimes (g^{-1})^*(s)$ for 
$v \in E_{\rho}$ and $s \in \Gamma(U,\mathcal{O}_{\mathbf{P}^n})$, where $U$ is a $G$-stable open subset.

The derived category $D^b(\text{coh}([\mathbf{C}^n/G]))$ is generated by their restrictions
$\mathcal{V}_{\rho} \vert_{\mathbf{C}^n}$ for all $\rho$.
We consider $D^b(\text{coh}([\mathbf{P}^n/G]))$ in the following theorem. 

Let $L = \mathbf{P}^n \setminus \mathbf{C}^n$ be the $G$-stable hyperplane at infinity.
We write $\mathcal{V}_{\rho}(iL) = \mathcal{V}_{\rho} \otimes \mathcal{O}_{\mathbf{P}^n}(iL)$.
Let $\pi: \mathbf{P}^n \to [\mathbf{P}^n/G]$ be a natural projection.

\begin{Thm}
(1) $\pi_*\mathcal{O}_{\mathbf{P}^n} \cong \bigoplus_{\rho} \mathcal{V}_{\rho}^{\oplus \dim E_{\rho}}$.

(2) The derived category of the quotient stack $[\mathbf{P}^n/G]$ has a full exceptional collection
consisting of the sheaves $\mathcal{V}_{\rho}(iL)$ for all $\rho$ and $-n \le i \le 0$:
\[
D^b(\text{coh}([\mathbf{P}^n/G])) = \langle \langle \mathcal{V}_{\rho}(-nL) \rangle_{\rho}, \dots, 
\langle \mathcal{V}_{\rho} \rangle_{\rho} \rangle.
\]

(3) $HH_m([\mathbf{P}^n/G]) = 0$ for $m \ne 0$.
\end{Thm}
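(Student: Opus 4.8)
The plan is to prove the three assertions in order, with (2) doing most of the work and (1), (3) following as essentially formal consequences.

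For (1), I would compute $\pi_*\mathcal{O}_{\mathbf{P}^n}$ directly. Since $\pi:\mathbf{P}^n\to[\mathbf{P}^n/G]$ is the quotient by the finite group $G$, a sheaf on $[\mathbf{P}^n/G]$ is a $G$-equivariant sheaf on $\mathbf{P}^n$, and $\pi_*\mathcal{O}_{\mathbf{P}^n}$ is $\mathcal{O}_{\mathbf{P}^n}$ with its natural $G$-linearization, viewed on the stack. As a $G$-equivariant sheaf this is $\mathbf{C}[G]\otimes_{\mathbf{C}}\mathcal{O}_{\mathbf{P}^n}$ only after trivializing the $G$-action on the base; more precisely, pushing forward and decomposing the regular representation gives $\pi_*\mathcal{O}_{\mathbf{P}^n}\cong\bigoplus_{\rho}\mathcal{V}_{\rho}^{\oplus\dim E_{\rho}}$ because $\mathbf{C}[G]\cong\bigoplus_\rho E_\rho^{\oplus\dim E_\rho}$ as $G$-modules and the diagonal linearization on $E_\rho\otimes\mathcal{O}_{\mathbf{P}^n}$ is exactly $\mathcal{V}_\rho$.

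For (2), I would bootstrap from Beilinson's resolution of the diagonal on $\mathbf{P}^n$. Beilinson gives a full exceptional collection $\mathcal{O}_{\mathbf{P}^n}(-n),\dots,\mathcal{O}_{\mathbf{P}^n}$ on $D^b(\mathrm{coh}(\mathbf{P}^n))$, equivalently a resolution of $\mathcal{O}_\Delta$ on $\mathbf{P}^n\times\mathbf{P}^n$ by sums of $\mathcal{O}(-i)\boxtimes\Omega^i(i)$. Since $L$ is $G$-stable, all the bundles $\mathcal{O}_{\mathbf{P}^n}(iL)$ and $\Omega^i_{\mathbf{P}^n}(iL)$ are $G$-equivariant, and the Beilinson resolution can be taken $G\times G$-equivariantly on $\mathbf{P}^n\times\mathbf{P}^n$. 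Taking $G$-invariants of the (equivariant) Fourier–Mukai kernel $\mathcal{O}_\Delta$ descends to the diagonal kernel on $[\mathbf{P}^n/G]\times[\mathbf{P}^n/G]$, and the resolution descends termwise. Tensoring each term $\mathcal{O}(-iL)\boxtimes\Omega^i(iL)$ with $\mathcal{V}_\rho\boxtimes\mathcal{V}_\rho^\vee$-type bundles (equivalently, decomposing the $G$-action on the fibers into irreducibles as in part (1)) exhibits $\mathcal{O}_\Delta$ on the stack as built from the objects $\mathcal{V}_\rho(-iL)$; hence these generate. For the semi-orthogonality and the vanishing of self-Exts, I would use that $\mathrm{Hom}_{[\mathbf{P}^n/G]}(\mathcal{A},\mathcal{B})=\mathrm{Hom}_{\mathbf{P}^n}(\mathcal{A},\mathcal{B})^G$ and that the relevant $\mathrm{Ext}$'s between the equivariant bundles $\mathcal{V}_\rho(iL)$ reduce, after forgetting the $G$-action, to sums of the known vanishings $\mathrm{Ext}^\bullet(\mathcal{O}(i),\mathcal{O}(j))$ and $\mathrm{Ext}^\bullet(\mathcal{O},\Omega^p(p))$ on $\mathbf{P}^n$ in the Beilinson range $-n\le i,j\le 0$; taking $G$-invariants only kills more, so all the required vanishings persist. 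The blocks $\langle\mathcal{V}_\rho(iL)\rangle_\rho$ are themselves semisimple (each $\mathcal{V}_\rho$ has $\mathrm{End}=\mathbf{C}$ by Schur and no higher self-Ext), so the listed collection is exceptional and full.

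For (3), I would simply invoke Theorem~\ref{HH} (additivity of Hochschild homology under an SOD): the SOD in (2) gives $HH_m([\mathbf{P}^n/G])\cong\bigoplus HH_m(\mathrm{Spec}\,\mathbf{C})$ over the factors of the exceptional collection, and $HH_m(\mathrm{point})=0$ for $m\ne 0$; hence $HH_m([\mathbf{P}^n/G])=0$ for $m\ne 0$. (One could instead argue via Theorem~\ref{HKR} that an exceptional collection forces $H^q(\Omega^p)=0$ off the diagonal, but the additivity argument is cleaner.) The main obstacle I anticipate is the bookkeeping in (2): verifying that the Beilinson resolution can be chosen $G$-equivariantly (this is standard, since $L$ is $G$-stable and the construction is functorial) and then carefully tracking how the $G$-decomposition of each Beilinson term produces exactly the sheaves $\mathcal{V}_\rho(iL)$ with the claimed ordering — checking that the semi-orthogonality direction ($i>j$ blocks to the right) matches the sign conventions. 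Everything else is a routine transfer of known facts about $\mathbf{P}^n$ through the exact functor "take $G$-invariants."
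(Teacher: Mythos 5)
Your Hom computations and parts (1) and (3) are in substance what the paper does: semi-orthogonality comes from $\bigl(E_{\rho}^{*}\otimes E_{\rho'}\otimes H^{m}(\mathbf{P}^n,\mathcal{O}_{\mathbf{P}^n}((j-i)L))\bigr)^{G}=0$ for $-n\le j-i<0$, exceptionality of each block from Schur's lemma, and (3) from the additivity of Hochschild homology; for (1) the paper fixes the multiplicities by the adjunction $\text{Hom}_{[\mathbf{P}^n/G]}(\mathcal{V}_{\rho},\pi_{*}\mathcal{O}_{\mathbf{P}^n})\cong\text{Hom}_{\mathbf{P}^n}(\pi^{*}\mathcal{V}_{\rho},\mathcal{O}_{\mathbf{P}^n})\cong\mathbf{C}^{\dim E_{\rho}}$, using that $\pi$ is \'etale, which is your regular-representation argument in a cleaner form.

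The genuine difference is how you obtain fullness in (2). The paper needs no resolution of the diagonal: by (1), $\pi_{*}\mathcal{O}_{\mathbf{P}^n}(iL)\cong\bigoplus_{\rho}\mathcal{V}_{\rho}(iL)^{\oplus\dim E_{\rho}}$, and since $\pi$ is \'etale, $\text{Hom}_{[\mathbf{P}^n/G]}(\pi_{*}\mathcal{O}_{\mathbf{P}^n}(iL),a[m])\cong\text{Hom}_{\mathbf{P}^n}(\mathcal{O}_{\mathbf{P}^n}(iL),\pi^{*}a[m])$; so if $a$ is right-orthogonal to all $\mathcal{V}_{\rho}(iL)$ with $-n\le i\le 0$, then $\pi^{*}a=0$ by Beilinson, hence $a=0$ (in characteristic zero $a$ is a direct summand of $\pi_{*}\pi^{*}a$ by averaging). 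Your route through an equivariant Beilinson resolution can be made to work, but one step is misstated: the kernel of the identity functor on $D^{b}(\text{coh}([\mathbf{P}^n/G]))$ is not obtained by ``taking $G$-invariants of $\mathcal{O}_{\Delta}$''; it is the pushforward of $\mathcal{O}_{\Delta}$ under $\pi\times\pi$, i.e.\ $\bigoplus_{g\in G}\mathcal{O}_{\Gamma_{g}}$ with its natural $G\times G$-linearization (the diagonal of the stack is not a closed immersion when stabilizers are nontrivial). To run your argument you would have to resolve each graph $\Gamma_{g}$ by a $g$-twisted Beilinson resolution and track the induced equivariant structure on the terms, whose isotypic decomposition then lands in the subcategory generated by the $\mathcal{V}_{\rho}(iL)$ --- or simply bypass this with the adjunction argument above. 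With that correction your proof is complete, and the ordering check you postpone is exactly the computation already displayed: Hom from the block of twist $i$ to the block of twist $j$ vanishes precisely because $-n\le j-i<0$.
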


\begin{proof}
(1) We have a canonical decomposition $\mathbf{C}[G] \cong \bigoplus_{\rho} E_{\rho}^{\oplus \dim E_{\rho}}$.
This decomposition will be globalized.
Since $\pi$ is \'etale, $\pi_*\mathcal{O}_{\mathbf{P}^n}$ is locally free, and 
$\pi^*\pi_*\mathcal{O}_{\mathbf{P}^n} \cong \mathbf{C}[G] \otimes \mathcal{O}_{\mathbf{P}^n}$.
We have $\pi^*\mathcal{V}_{\rho} \cong \mathcal{O}_{\mathbf{P}^n}^{\dim E_{\rho}}$.
Hence
\[
\text{Hom}_{[\mathbf{P}^n/G]}(\mathcal{V}_{\rho}, \pi_*\mathcal{O}_{\mathbf{P}^n}) \cong 
\text{Hom}_{\mathbf{P}^n}(\pi^*\mathcal{V}_{\rho}, \mathcal{O}_{\mathbf{P}^n}) \cong \mathbf{C}^{\dim E_{\rho}}.
\]
Therefore we obtain our result.

(2) 
We have $D^b(\mathbf{P}^n) = \langle \mathcal{O}_{\mathbf{P}^n}(-nL), \dots, \mathcal{O}_{\mathbf{P}^n}\rangle$ by
Beilinson \cite{Beilinson}.
We have a canonical decomposition 
\[
\pi_*\mathcal{O}_{\mathbf{P}^n}(iL) \cong \bigoplus_{\rho} \mathcal{V}_{\rho}(iL)^{\dim E_{\rho}}.
\]
Since $\pi$ is \'etale, we have 
$\text{Hom}_{[\mathbf{P}^n/G]}(\pi_*\mathcal{O}_{\mathbf{P}^n}(i),a) \cong \text{Hom}_{\mathbf{P}^n}(\mathcal{O}_{\mathbf{P}^n}(i),\pi^*a)$
for $a \in D^b([\mathbf{P}^n/G])$.
Therefore $D^b([\mathbf{P}^n/G])$ is generated by the $\mathcal{V}_{\rho}(iL)$ for all $\rho$ and $-n \le i \le 0$.

If $-n \le j-i < 0$, we have
\[
\text{Hom}_{[\mathbf{P}^n/G]}(\mathcal{V}_{\rho}(iL), \mathcal{V}_{\rho'}(jL)[m]) 
= (E_{\rho^*} \otimes E_{\rho'} \otimes H^m(\mathbf{P}^n,\mathcal{O}_{\mathbf{P}^n}((j-i)L)))^G 
= 0
\]
for all $m$.
On the other hand, we have
\[
\text{Hom}_{[\mathbf{P}^n/G]}(\mathcal{V}_{\rho}(iL), \mathcal{V}_{\rho'}(iL)[m]) 
= (E_{\rho^*} \otimes E_{\rho'} \otimes H^m(\mathbf{P}^n,\mathcal{O}_{\mathbf{P}^n}))^G 
= \begin{cases}
\mathbf{C} &\text{ if } \rho = \rho' \text{ and } m=0 \\ 
0 &\text{ otherwise}.
\end{cases}
\]
Therefore $(\mathcal{V}_{\rho}(iL))$ with a suitable order is an exceptional collection.

(3) This is due to the additivity of the Hochschild homology (Theorem~\ref{HH}).
\end{proof}

\section{proof of Theorem~\ref{main}}

{\em Step 1}.

Let $H = G \cap SL(3,\mathbf{C})$.
Then $H$ is a normal subgroup of $G$, and $G/H \cong \mathbf{Z}_r$ is a cyclic group of order $r = [G:H]$.
By Theorem~\ref{BKR} (\cite{BKR}), an irreducible component of the Hilbert scheme of 
$H$-equivariant sheaves on $\mathbf{C}^3$, denoted by $Y_1 = H\text{-Hilb}(\mathbf{C}^3)$, 
gives a resolution of singularities $f_1: Y_1 \to \mathbf{C}^3/H$ such that 
$K_{Y_1} = f_1^*K_{\mathbf{C}^3/H}$.
Moreover there is an equivalence
$D^b([\mathbf{C}^3/H]) \cong D^b(Y_1)$ of triangulated categories.

By construction, the quotient group $\mathbf{Z}_r$ acts on $Y_1$.
Let $\pi_1: Y_1 \to X_1 = Y_1/\mathbf{Z}_r$ be the natural projection, 
and $\bar f_1: X_1 \to X = \mathbf{C}^3/G$
the induced projective birational morphism.
We define a $\mathbf{Q}$-divisor $B_1$ on $X_1$ by 
$\pi_1^*(K_{X_1}+B_1) = K_{Y_1}$.
The coefficients of $B_1$ belong to the standard set $\{1-1/e \mid e \in \mathbf{Z}_{>0}\}$.
The quotient pair $(X_1,B_1)$ has only toroidal KLT singularities and  in the following sense:
$X_1$ has an \'etale covering $\{U\}$ with reduced divisors $\bar B_U$ such that 
$\text{supp}(B_1) \cap U \subset \bar B_U$ and that the pairs $(U,\bar B_U)$ are toroidal without self-intersection. 

Let $\pi_2: \mathbf{C}^3/H \to X$ be the quotient map and define a $\mathbf{Q}$-divisor $B$ on $X$ by 
$\pi_2^*(K_X+B) = K_{\mathbf{C}^3/H}$.
Since $f_1^*K_{\mathbf{C}^3/H} = K_{Y_1}$ and $\pi_2^*(K_X+B)=K_{\mathbf{C}^3/H}$, 
we have $\bar f_1^*(K_X+B)=K_{X_1}+B_1$.
Here we note that $H$ does not contain quasi-reflections, but $G$ may do so that $B$ may be non-zero.
We have the following commutative diagram:
\[
\begin{CD}
@. @. Y \\
@. @. @VVgV \\
@. Y_1 @>{\pi_1}>> X_1 \\
@. @V{f_1}VV @VV{\bar f_1}V \\
\mathbf{C}^3 @>{\pi_3}>> \mathbf{C}^3/H @>{\pi_2}>> X
\end{CD}
\]

We take a maximal $\mathbf{Q}$-factorial terminalization $g: Y \to (X_1,B_1)$ by Theorem~\ref{maximal}.
It is automatically locally toroidal, i.e.,
toroidal on each $U$ with respect to $(U,\bar B_U)$. 
Hence $Y$ has only isolated terminal quotient singularities.
By classification, they are quotient singularities of types $\frac 1{r_i}(a_i,-a_i,1)$ for coprime integers $r_i,a_i$ 
with $0 < a_i < r_i$. 
We write $g^*(K_{X_1}+B_1) = K_Y + B_Y+E_Y$, where $B_Y$ is the strict transform of $B_1$.

Let $f = \bar f_1 \circ g: Y \to X$.
Then we have $f^*(K_X+B) = K_Y + B_Y+E_Y$.
Since $Y$ is a highest model which satisfies this equality with $B_Y+E_Y$ effective,
$f$ is also a maximal $\mathbf{Q}$-factorial terminalization of $(X,B)$.

\vskip 1pc

{\em Step 2}.

We obtained equalities of log canonical divisors in the previous step.
We check the corresponding equivalences and fully faithful embeddings of the derived categories.

First by Theorem~\ref{BKR}, we have $D^b([\mathbf{C}^3/H]) \cong D^b(Y_1)$.
Since $\mathbf{Z}_r$ acts on $[\mathbf{C}^3/H]$ and $Y_1$ compatibly, we have 
$D^b([\mathbf{C}^3/G]) \cong D^b([Y_1/\mathbf{Z}_r])$.
We note that the Deligne-Mumford stack $[Y_1/\mathbf{Z}_r]$ coincides with 
the one associated to the pair $(X_1,B_1)$.

By Theorem~\ref{maximal}, $g: Y \to X_1$ is decomposed into a sequence of locally toroidal birational maps
\[
Y = Y_0 \dashrightarrow Y_1 \dashrightarrow \dots \dashrightarrow Y_m = X_1
\]
such that each $\alpha_i: Y_{i-1} \to Y_i$ is either a divisorial contraction or a flip, and that
$K_{Y_{i-1}} + B_{Y_{i-1}} \le K_{Y_i} + B_{Y_i}$ for all $i$, where
the $B_{Y_i}$ are the strict transforms of $B_1$.

We apply Theorem~\ref{toric} to the following inequalities of the log canonical divisors:
\[
K_Y \le K_Y + B_Y = K_{Y_0}+B_{Y_0} \le \dots \le K_{Y_m}+B_{Y_m} = K_{X_1}+B_1.
\]
For each inequality, several semi-orthogonal components are cut out from $D^b([Y_1/\mathbf{Z}_r])$, and 
each component is equivalent to the derived category of a Deligne-Mumford stack above certain center, 
i.e., an irreducible component of $B_Y$ for 
the first inequality, and an irreducible component of the image of the exceptional locus of the birational map 
for the rest.
If the Deligne-Mumford stacks are associated to pairs, then they are further semi-orthogonally decomposed 
by the first case of Theorem~\ref{toric}.
Thus we obtain toroidal varieties $Z_1,\dots,Z_{l'}$ proper above $X_1$
such that there is a semi-orthogonal decomposition
\[
D^b([Y_1/\mathbf{Z}_r]) = \langle D^b(\tilde Z_1), \dots, D^b(\tilde Z_{l'}), D^b(\tilde Y) \rangle
\]
where the $\tilde Z_i$ are the smooth Deligne-Mumford stacks associated to the $Z_i$.

We note that, although the toroidal structures are defined only \'etale locally over $X_1$, the centers $Z_j$ are
defined globally above $X_1$.
Indeed if some $Z_j$ is above a curve on $X_1$, then the maximal $\mathbf{Q}$-factorial terminalization
along generic points of the curve is unique, hence they glue together.

Let $d_i = \dim Z_i \le 2$.
If $d_i = 0$, then $Z_i$ is a point above the origin of $X$.
In this case $D^b(\tilde Z_i)$ is generated by an exceptional object.

If $d_i = 1$, then $Z_i$ is a smooth curve.
It is either projective above the origin, or projective
above the $1$-dimensional singular locus of $X$.
We shall prove that $Z_i$ is a rational curve in either case in the next step.
Therefore if $Z_i$ is projective, then $D^b(\tilde Z_i)$ is further semi-orthogonally 
decomposed into the derived categories of points.

If $d_i = 2$, then $Z_i$ is a normal surface with only quotient singularities.
It is either projective above the origin, projective
above the $1$-dimensional singular locus of $X$, or above the image of a hyperplane which corresponds to
a quasi-reflection in $G$. 
In the first case, $Z_i$ is covered by rational curves by Theorem~\ref{length}.
Therefore $D^b(\tilde Z_i)$ is further semi-orthogonally 
decomposed into the derived categories of curves and points.
In the second case, $Z_i$ is covered by rational curves contained in the fibers over points of $X_1$, 
and $D^b(\tilde Z_i)$ is again further semi-orthogonally decomposed into the derived categories of curves and points.
In the last case, $D^b(\tilde Z_i)$ is semi-orthogonally decomposed into the derived categories of the minimal 
resolution of the quotient surface and points.
We shall also prove that the curves appearing in these decompositions are all rational in the next step.

\vskip 1pc

{\em Step 3}. 

The curves appearing in Step 2 are compactified to smooth projective curves above $\mathbf{P}^3$, 
and also appear in the parallel SOD of $D^b([\mathbf{P}^n/G])$. 
Suppose that one of these curves $C$ has a positive genus.
By Theorem~\ref{HKR}, we have $HH_1(C) \cong H^0(C, \Omega_C^1) \ne 0$, a contradiction.
Therefore only rational curves appear.

Graduate School of Mathematical Sciences, University of Tokyo,

Komaba, Meguro, Tokyo, 153-8914, Japan

kawamata@ms.u-tokyo.ac.jp

\end{document}